\newtheorem{thm}{Theorem}[section]
\newtheorem{cor}[thm]{Corollary}
\newtheorem{lem}[thm]{Lemma}
\newtheorem{prop}[thm]{Proposition}
\theoremstyle{definition}
\newtheorem{defn}[thm]{Definition}
\theoremstyle{remark}
\newtheorem{rem}[thm]{Remark}
\newtheorem{ex}[thm]{Example}
\numberwithin{equation}{section}
\newcommand{\norm}[1]{\left\Vert#1\right\Vert}
\newcommand{\abs}[1]{\left\vert#1\right\vert}
\newcommand{\eps}{\varepsilon}
\newcommand{\e}{\mathrm{e}}
\newcommand{\RR}{{\mathbb R}}
\newcommand{\CC}{{\mathbb C}}
\newcommand{\NN}{{\mathbb N}}
\newcommand{\calC}{{\mathcal C}}
\newcommand{\calO}{{\mathcal O}}
\begin{document}

\title[a differential operator representation]{%
A differential operator representation of continuous homomorphisms
between the spaces of entire functions of given proximate orders}

\author[T.~Aoki]{Takashi Aoki}
\address[T.~Aoki]{Department of Mathematics,
Kindai University, Higashi--Osaka 577-8502, Japan}
\email{aoki@math.kindai.ac.jp}
\thanks{The first author is supported by JSPS KAKENHI Grant Nos. 26400126 and 18K03385}

\author[R.~Ishimura]{Ryuichi Ishimura}
\address[R.~Ishimura]{Department of Mathematics and Informatics, Faculty of Science,
Chiba University, Yayoicho, Chiba 263-8522, Japan}
\email{ishimura@math.s.chiba-u.ac.jp}

\author[Y.~Okada]{Yasunori Okada}
\address[Y.~Okada]{Institute of Management and Information Technologies,
Chiba University, Yayoicho, Chiba 263-8522, Japan}
\email{okada@math.s.chiba-u.ac.jp}
\thanks{The third author is supported by JSPS KAKENHI Grant No. 16K05170}

\subjclass[2010]{Primary 34A15, Secondary 47F99.}
\keywords{entire functions, proximate orders,
partial differential operators of infinite order}

\dedicatory{Dedicated to the memory of Carlos A. Berenstein}

\thispagestyle{empty}


\begin{abstract}
In this paper, we consider the locally convex spaces of entire
functions with growth given by proximate orders,
and study the representation as a differential operator
of a continuous homomorphism from such a space to another one.
As a corollary, we give a characterization of continuous endomorphisms
of such spaces.
\end{abstract}

\maketitle


\section{Introduction}

In the paper \cite{aiosu},
we proved with coauthors Struppa and Uchida
that any continuous endomorphism of the space
of entire functions with a given constant order $\rho>0$
is characterized as an infinite order partial differential operator
with the symbol satisfying certain growth conditions.
Such endomorphisms play a role in the study of superoscillations
(see \cite{acss1}, \cite{acss2}).
As remarked in \cite{aiosu}, we can largely generalize these results
to the case of proximate order in the sense of Valiron \cite{V},
instead of a constant order.
Such generalizations were first proved
by X. Jin \cite[Corollaries 6.5 and 6.6]{J},
where he considered in a more general framework
of formal power series of class ${\rm M}$.

In this paper, we consider continuous homomorphisms
between spaces of entire functions
given by possibly different proximate orders,
and will give their differential operator representations.
See Theorems \ref{thm:main1} and \ref{thm:main2}.
The characterization of continuous endomorphisms
of a space given by a proximate order
is then given as corollaries (Corollaries \ref{cor:main1} and \ref{cor:main2}).
By studying homomorphisms between two spaces
rather than endomorphisms of one space,
it is now much more visible
that two proximate orders of the source and the target spaces
play almost independent roles
in the growth conditions for symbols.
See the conditions
\eqref{eq:est-D-rho1-rho2-normal}
and \eqref{eq:est-D-rho1-rho2-minimal}
in Definition~\ref{def:D-rho1-rho2}.

The plan of this paper is as follows.
After recalling the notion of a proximate order for a
positive order in \S~\ref{sec:prox-orders-their},
we prepare topological properties of the spaces of entire functions
with a given proximate order in \S~\ref{sec:spac-entire-funct}.
In \S~\ref{sec:diff-oper-repr}, we give of our main results and their proofs.
Note that we use, in this paper,
the terminology ``continuous homomorphism''
instead of ``continuous linear operator'',
although the latter seems more familiar in functional analysis.

\section{Proximate orders and their properties}
\label{sec:prox-orders-their}

We recall several notions and properties concerning
the spaces of entire functions with growth order,
principally, we will follow \cite{Ll-G}.

The notion of a proximate order was introduced by G. Valiron \cite{V}.
\begin{defn}[proximate order]\label{def:proximate-order}
A differentiable function $\varrho(r) \geq 0$ defined for $r \geq 0$
is said to be a {\em proximate order}
for the {\em order} $\rho\geq 0$ if it satisfies
\begin{itemize}
\item[(i)] $\displaystyle\lim_{r \to + \infty} \varrho(r) = \rho$,
\item[(ii)] $\displaystyle\lim_{r \to + \infty} \varrho'(r) r \ln r = 0$.
\end{itemize}
\end{defn}
Note that in many literatures (for ex., \cite{V}, \cite{Lv}, \cite{Ll-G}),
a proximate order and its order share the same symbol
like a proximate order $\rho(r)$ and its order $\rho$,
and they are distinguished
by the existence of postfixed ``$(r)$'' or ``$(\cdot)$''.
However, in this paper, we use different symbols $\varrho(r)$ and $\rho$
in order for an easy distinguishability even without a postfix.

It is well-known that if $\rho>0$,
there exists a large $r_0>0$
such that the function $r^{\varrho(r)}$ is strictly increasing on $r>r_0$,
and tending to $+\infty$.
If moreover $\rho > 1$, $r^{\varrho(r) -1}$ is also strictly increasing
for large $r$, (see e.g. \cite[Chapter I, \S 12]{Lv}).
We remark that these facts follow from
\begin{equation*}
\frac{d}{d r} r^{\varrho(r)} = r^{\varrho(r)-1} (\varrho'(r) r \ln r + \varrho(r)).
\end{equation*}

Throughout this section, $\varrho(r)$ denotes a proximate order
for a positive order $\rho=\displaystyle\lim_{r\rightarrow +\infty}\varrho(r)>0$.
As is noted in \cite[p.16, Note.]{Ll-G},
we can always take another proximate order $\hat{\varrho}(r)$
which satisfies the conditions:
\begin{gather}
\label{eq:rhohat-1}
\text{there exists a constant $r_1>0$
such that $\hat{\varrho}(r)=\varrho(r)$ for any $r\geq r_1$},
\\
\label{eq:rhohat-2}
\text{$r\mapsto r^{\hat{\varrho}(r)}$ is strictly increasing on $r>0$
and maps $(0,\infty)$ onto $(0,\infty)$}.
\end{gather}
\begin{defn}[normalization of a proximate order]\label{def:normaliz-prox}
For a proximate order $\varrho(r)$ for a positive order,
a proximate order $\hat{\varrho}(r)$ satisfying \eqref{eq:rhohat-1} and
 \eqref{eq:rhohat-2} is said to be
a {\em normalization} of $\varrho(r)$.
\end{defn}
Though there are many choices of such $\hat{\varrho}(r)$,
we fix one $\hat{\varrho}(r)$ for $\varrho(r)$,
and following to \cite{Ll-G},
we denote by $r = \varphi (t)$ for $t>0$,
the inverse function of $t = r^{\hat{\varrho}(r)}$ $(r>0)$.
As in \cite[p.203]{Ll-G},
we set
\begin{equation}\label{G-rho-q}
G_q = G_{\hat{\varrho},q} := \frac{\varphi(q)^q}{(\e \rho)^{q/\rho}},
\quad\text{for $q\in\NN$},
\end{equation}
where they denote by $A_q$ instead. 
(Note that $\varphi(t)$ for $t\gg1$ and so $G_{\hat{\varphi},q}$ for
$q\gg1$ do not depend on the choice of a normalization.)
Remark that for a constant proximate order,
that is, $\varrho(r)\equiv\rho$,
we take $\hat{\varrho}(r)=\varrho(r)\equiv\rho$
and we have $\varphi(q) = q^{1/\rho}$, which implies
\begin{equation*}
G_q
=\frac{q^{q/\rho}}{(\e \rho)^{q / \rho}}
=\Bigl(\frac{q^q}{\e^q q!}\Bigr)^{1/\rho}\rho^{-q/\rho}q!^{1/\rho}
\sim(\rho^q\sqrt{2\pi q})^{-1/\rho}q!^{1/\rho}
\sim'q!^{1/\rho}.
\end{equation*}
Here we denote by $a_q\sim b_q$ and by $a_q\sim' b_q$
the relations $a_q/b_q\rightarrow 1$ and $\ln(a_q/b_q)=O(q)$
as $q\rightarrow\infty$ respectively.

We prepare some auxiliary properties of proximate orders.

\begin{lem}\label{lem:pseudo-subadditivity}
There exist constants $\kappa>0$ and $B>0$ depending only on $\hat{\varrho}$
such that
\begin{equation*}
\forall r>0,\,\forall s>0,\quad
(r+s)^{\hat{\varrho}(r+s)}
\leq \kappa(r^{\hat{\varrho}(r)}+s^{\hat{\varrho}(s)})+B.
\end{equation*}
Precisely speaking, we can choose $\kappa$ depending only
on the order $\rho=\lim_{r\rightarrow+\infty}\varrho(r)$.
\end{lem}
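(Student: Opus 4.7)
The plan is to reduce the inequality to the regime of large arguments via the slow variation of $r\mapsto r^{\hat{\varrho}(r)}$, and absorb the behavior for small $r,s$ into the additive constant $B$. By symmetry, I may assume $s\leq r$, so that $r+s\leq 2r$, and it then suffices to compare $(r+s)^{\hat{\varrho}(r+s)}$ with $r^{\hat{\varrho}(r)}$.

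The core estimate I would establish is the following slow variation property: for every $\epsilon>0$, there exists $r_0>0$ such that
\begin{equation*}
(cr)^{\hat{\varrho}(cr)}\leq (2^{\rho}+\epsilon)\, r^{\hat{\varrho}(r)}
\qquad\text{for all } r\geq r_0 \text{ and all } c\in[1,2].
\end{equation*}
To prove this, since $\hat{\varrho}=\varrho$ on $[r_1,\infty)$, condition~(ii) of Definition~\ref{def:proximate-order} yields $M(r):=\sup_{t\geq r}|\hat{\varrho}'(t)\,t\ln t|\to 0$. For $r\geq r_1$ and $c\in[1,2]$, the bound $|\hat{\varrho}'(t)|\leq M(r)/(t\ln t)$ gives
\begin{equation*}
|\hat{\varrho}(cr)-\hat{\varrho}(r)|
=\Bigl|\int_r^{cr}\hat{\varrho}'(t)\,dt\Bigr|
\leq M(r)\,\bigl(\ln\ln(cr)-\ln\ln r\bigr)
\leq M(r)\,\frac{\ln c}{\ln r},
\end{equation*}
so $r^{\hat{\varrho}(cr)-\hat{\varrho}(r)}=\exp\bigl((\hat{\varrho}(cr)-\hat{\varrho}(r))\ln r\bigr)\to 1$ and $c^{\hat{\varrho}(cr)}\to c^{\rho}$ uniformly for $c\in[1,2]$. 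Writing $(cr)^{\hat{\varrho}(cr)}=c^{\hat{\varrho}(cr)}\,r^{\hat{\varrho}(cr)-\hat{\varrho}(r)}\cdot r^{\hat{\varrho}(r)}$ then yields the claim.

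Now fix $\epsilon=1$ and set $\kappa:=2^{\rho}+1$, which depends only on $\rho$. For $r\geq r_0$, applying the slow variation estimate with $c=1+s/r\in[1,2]$ gives
\begin{equation*}
(r+s)^{\hat{\varrho}(r+s)}\leq \kappa\, r^{\hat{\varrho}(r)}\leq \kappa\bigl(r^{\hat{\varrho}(r)}+s^{\hat{\varrho}(s)}\bigr).
\end{equation*}
For the remaining case $r<r_0$ (so $s<r_0$ and $r+s<2r_0$), the function $(r,s)\mapsto(r+s)^{\hat{\varrho}(r+s)}$ is continuous on $[0,r_0]^2$ and hence bounded there by some constant $B$ depending on $\hat{\varrho}$. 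Adding this $B$ on the right-hand side handles the small-argument regime.

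The main obstacle is obtaining the slow variation estimate uniformly in $c\in[1,2]$; once this is secured, the rest of the argument is essentially bookkeeping. An alternative route would be to invoke a standard slow variation theorem for proximate orders from \cite{Ll-G} or \cite{Lv}, but the direct calculation above makes the dependence $\kappa=\kappa(\rho)$ transparent.
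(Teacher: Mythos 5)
Your proof is correct and its overall structure mirrors the paper's: reduce by symmetry to $s\leq r$, establish a regular-variation estimate for $(cr)^{\hat\varrho(cr)}$ with $c\in[1,2]$ valid for $r$ large, and absorb the compact region of small $r,s$ into the additive constant $B$. The one substantive difference is how the key estimate is obtained. The paper simply invokes \cite[Proposition~1.20]{Ll-G} as a black box, which states that for every $\varepsilon>0$ there is $R(\varepsilon)$ with $(kr)^{\hat\varrho(kr)}\leq(1+\varepsilon)k^{\rho}r^{\hat\varrho(r)}$ for $1\leq k\leq 2$, $r\geq R(\varepsilon)$. You instead derive the uniform slow-variation bound directly from condition~(ii) of Definition~\ref{def:proximate-order}: setting $M(r):=\sup_{t\geq r}\abs{\hat\varrho'(t)\,t\ln t}\to 0$ gives $\abs{\hat\varrho(cr)-\hat\varrho(r)}\leq M(r)\ln c/\ln r$, and factoring $(cr)^{\hat\varrho(cr)}=c^{\hat\varrho(cr)}\,r^{\hat\varrho(cr)-\hat\varrho(r)}\,r^{\hat\varrho(r)}$ shows both prefactors tend uniformly to $c^{\rho}$ and $1$. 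This is a clean, self-contained derivation that also makes it transparent why $\kappa$ can be chosen depending only on $\rho$; the paper's route is shorter but leaves that dependence implicit in the cited proposition.
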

\begin{proof}
By \cite[p.16, Proposition 1.20]{Ll-G},
for any $\varepsilon>0$, we have $R(\varepsilon)$ such that
\begin{equation*}
1\leq\forall k\leq 2,\,\forall r\geq R(\varepsilon),\quad
(k r)^{\hat{\varrho}(k r)}
\leq(1+\varepsilon)k^{\rho}r^{\hat{\varrho}(r)}.
\end{equation*}
Since $(k r)^{\hat{\varrho}(k r)}$ is bounded on $1\leq k\leq 2$,
$0<r<R(\varepsilon)$, we have some $C_{\varepsilon}$ such that
\begin{equation*}
1\leq\forall k\leq 2,\,\forall r>0,\quad
(k r)^{\hat{\varrho}(k r)}
\leq(1+\varepsilon)k^{\rho}r^{\hat{\varrho}(r)}+C_{\varepsilon}.
\end{equation*}
Now, for any $s$ and $r$ satisfying $0<s\leq r$,
we define $k:=1+s/r\leq 2$ and get
\begin{equation*}
(r+s)^{\hat{\varrho}(r+s)}
=(kr)^{\hat{\varrho}(kr)}
\leq(1+\varepsilon)k^{\rho}r^{\hat{\varrho}(r)}+C_{\varepsilon}
\leq(1+\varepsilon)2^{\rho}(r^{\hat{\varrho}(r)}+s^{\hat{\varrho}(s)})+C_{\varepsilon}.
\end{equation*}
The inequality
$(r+s)^{\hat{\varrho}(r+s)}
\leq(1+\varepsilon)2^{\rho}(r^{\hat{\varrho}(r)}+s^{\hat{\varrho}(s)})+C_{\varepsilon}$
also holds for $0<r\leq s$ by symmetry.

To conclude the proof, it suffices to take $\kappa>2^{\rho}$ arbitrarily
and $B=C_{\eps}$ with $\varepsilon:=\kappa/2^{\rho}-1$.
\end{proof}

\begin{lem}\label{lem:Gq}
The sequence $\{G_p\}_p$ is supermultiplicative, that is,
\begin{equation*}
G_p G_q\leq G_{p+q},
\quad\text{for any $p,q\in\NN$}.
\end{equation*}
\end{lem}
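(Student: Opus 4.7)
The plan is to reduce the inequality to a direct comparison of values of $\varphi$ and then invoke the monotonicity of $\varphi$ guaranteed by the normalization. From the definition \eqref{G-rho-q} one has
\begin{equation*}
G_p G_q = \frac{\varphi(p)^p\,\varphi(q)^q}{(\e\rho)^{(p+q)/\rho}}, \qquad G_{p+q} = \frac{\varphi(p+q)^{p+q}}{(\e\rho)^{(p+q)/\rho}},
\end{equation*}
so the common denominator cancels and the claim $G_p G_q \leq G_{p+q}$ reduces to the purely multiplicative statement $\varphi(p)^p \varphi(q)^q \leq \varphi(p+q)^{p+q}$.

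To handle this, I would use the fact that, by condition \eqref{eq:rhohat-2}, the map $r \mapsto r^{\hat{\varrho}(r)}$ is a strictly increasing bijection of $(0,\infty)$ onto itself; hence its inverse $\varphi$ is strictly increasing on $(0,\infty)$ as well. For $p, q \geq 1$ this immediately yields $\varphi(p+q) \geq \varphi(p)$ and $\varphi(p+q) \geq \varphi(q)$, so that
\begin{equation*}
\varphi(p+q)^{p+q} = \varphi(p+q)^p \cdot \varphi(p+q)^q \geq \varphi(p)^p \cdot \varphi(q)^q,
\end{equation*}
which is exactly the required inequality. If the convention $0\in\NN$ is in force, the degenerate cases $p=0$ or $q=0$ are trivial under the natural reading $G_0=1$.

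Since the whole argument rests on a single monotonicity observation, no genuine obstacle is expected. The only point worth double-checking is that the monotonicity of $r \mapsto r^{\hat{\varrho}(r)}$ is asserted on all of $(0,\infty)$ rather than only for large $r$; this is precisely the strength gained by passing from $\varrho$ to its normalization $\hat{\varrho}$ via \eqref{eq:rhohat-2}. In effect, supermultiplicativity of $\{G_p\}$ is little more than a reformulation of the normalization condition.
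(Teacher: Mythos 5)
Your proof is correct and takes essentially the same approach as the paper: both cancel the common factor $(\e\rho)^{(p+q)/\rho}$ and reduce the claim to the monotonicity of $\varphi$, which the paper writes as $\bigl(\varphi(p)/\varphi(p+q)\bigr)^p\bigl(\varphi(q)/\varphi(p+q)\bigr)^q\leq 1$.
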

\begin{proof}
We have the following inequality
\begin{equation*}
\frac{G_p G_q}{G_{p+q}}
=\frac{\varphi(p)^p}{(\e\rho)^{p/\rho}}
\cdot\frac{\varphi(q)^q}{(\e\rho)^{q/\rho}}
\cdot\frac{(\e\rho)^{(p+q)/\rho}}{\varphi(p+q)^{p+q}}
=
\Bigl(\frac{\varphi(p)}{\varphi(p+q)}\Bigr)^p
\cdot\Bigl(\frac{\varphi(q)}{\varphi(p+q)}\Bigr)^q
\leq 1,
\end{equation*}
for $p,q\in\NN$, since $\varphi(t)$ is increasing.
\end{proof}

As for $\varphi$,
by \cite[the proof of Theorem 1.23]{Ll-G}
(c.f. for example \cite[the proof of Lemma 6.2]{J}),
we give
\begin{lem}\label{VitesseDeVarphi}
For every $\delta > 0$ with 
$\displaystyle{\delta < \frac{1}{\rho}}$, there exists $T_0 > 0$ 
such that if $t \geq T_0$, we have
\begin{equation}\label{derDelogVarphi}
\left(\frac{1}{\rho} - \delta\right) \frac{d}{d t} \ln t 
< \frac{d}{d t} \ln \varphi(t) 
< \left(\frac{1}{\rho} + \delta\right) \frac{d}{d t} \ln t.
\end{equation}
\end{lem}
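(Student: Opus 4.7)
The plan is to compute $\frac{d}{dt}\ln\varphi(t)$ explicitly by implicit differentiation of the defining relation $t=r^{\hat{\varrho}(r)}$ with $r=\varphi(t)$, and then to pass to the limit $t\to\infty$ using the two defining properties of a proximate order. No serious obstacle is expected; the entire content is a short calculation followed by reading off a limit.

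Taking logarithms in $t=r^{\hat{\varrho}(r)}$ gives $\ln t=\hat{\varrho}(r)\ln r$. Differentiating with respect to $t$ and using $\varphi(t)=r$ yields
\begin{equation*}
\frac{d}{dt}\ln\varphi(t)=\frac{1}{r}\frac{dr}{dt}=\frac{1}{t\bigl(\hat{\varrho}'(r)\,r\ln r+\hat{\varrho}(r)\bigr)},
\end{equation*}
so, combined with $\frac{d}{dt}\ln t=1/t$, the ratio of the two logarithmic derivatives is
\begin{equation*}
\frac{d/dt\,\ln\varphi(t)}{d/dt\,\ln t}=\frac{1}{\hat{\varrho}'(r)\,r\ln r+\hat{\varrho}(r)},\qquad r=\varphi(t).
\end{equation*}

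Next, as $t\to\infty$, I observe that $r=\varphi(t)\to\infty$ by the surjectivity in \eqref{eq:rhohat-2}. Then Definition~\ref{def:proximate-order}(ii) gives $\hat{\varrho}'(r)\,r\ln r\to 0$ and (i) gives $\hat{\varrho}(r)\to\rho$, so the ratio displayed above tends to $1/\rho$. Consequently, for any $\delta\in(0,1/\rho)$ there exists $T_0>0$ such that for all $t\geq T_0$ the ratio lies in the open interval $(1/\rho-\delta,\,1/\rho+\delta)$; multiplying through by the positive quantity $\frac{d}{dt}\ln t=1/t$ then yields \eqref{derDelogVarphi}.
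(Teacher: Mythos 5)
Your computation is correct: implicit differentiation of $\ln t=\hat{\varrho}(r)\ln r$ with $r=\varphi(t)$ gives
\begin{equation*}
\frac{d}{dt}\ln\varphi(t)\Big/\frac{d}{dt}\ln t=\frac{1}{\hat{\varrho}'(r)\,r\ln r+\hat{\varrho}(r)},
\end{equation*}
and since $\varphi(t)\to\infty$ as $t\to\infty$, conditions (i) and (ii) of Definition~\ref{def:proximate-order} make the denominator tend to $\rho>0$, so the ratio tends to $1/\rho$ and the stated two-sided bound follows for $t$ large. The paper itself supplies no internal proof here --- it simply refers to the proof of Theorem~1.23 in Lelong--Gruman (and Lemma~6.2 of Jin) --- so there is no in-text argument to compare against, but your short self-contained calculation is exactly the kind of argument that the cited sources carry out, and it is sound. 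One minor point worth keeping in mind (though it does not affect correctness): you implicitly use that $\hat{\varrho}'(r)\,r\ln r+\hat{\varrho}(r)$ is eventually positive so that dividing by $d/dt\,\ln t=1/t$ and re-multiplying preserves the inequalities; this is automatic once the denominator is close to $\rho$, i.e. for $t\geq T_0$, and is also consistent with the remark in \S\ref{sec:prox-orders-their} that $r\mapsto r^{\hat{\varrho}(r)}$ is eventually strictly increasing, which guarantees the differentiability of $\varphi$ used in the implicit differentiation.
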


\begin{lem}\label{lem:y-u-t}
For $u,t,\sigma>0$, we define
\begin{equation*}
y_{\sigma}(u,t):=\ln\frac{\varphi(t)}{\varphi(u)}-\sigma\frac{t}{u}.
\end{equation*}
Then for any $\sigma'$ with $0<\sigma'<\sigma$, there exists $T_1$
such that
\begin{equation*}
y_{\sigma}(u,t)+\frac{1}{\rho}\ln(\e\rho)
\leq -\frac{1}{\rho}\ln\sigma',
\quad\text{for any $u,t\geq T_1$}.
\end{equation*}
\end{lem}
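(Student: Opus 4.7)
The plan is to reduce the target inequality, after setting $x := t/u$, to an elementary one-variable optimization, and then to absorb the error in the asymptotic $\ln\varphi(t)\approx\frac{1}{\rho}\ln t$ (furnished by Lemma~\ref{VitesseDeVarphi}) against the slack $\sigma-\sigma'>0$. Concretely, a one-line calculus optimization shows that $g(x):=\frac{1}{\rho}\ln x-\sigma x$ attains its global maximum on $x>0$ at $x_\ast:=1/(\rho\sigma)$, with
\begin{equation*}
g(x_\ast)=-\tfrac{1}{\rho}\ln(\e\rho\sigma)<-\tfrac{1}{\rho}\ln(\e\rho\sigma'),
\end{equation*}
since $\sigma'<\sigma$. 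So if $\ln(\varphi(t)/\varphi(u))$ were exactly $\frac{1}{\rho}\ln(t/u)$, the conclusion would hold even with $\sigma$ in place of $\sigma'$; this strict gap is what we will exploit to absorb a small perturbation coming from Lemma~\ref{VitesseDeVarphi}.

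To make this precise, I would fix a small $\delta\in(0,1/\rho)$ (to be chosen at the end) and invoke Lemma~\ref{VitesseDeVarphi} to get $T_0>0$ beyond which \eqref{derDelogVarphi} holds. Integrating this derivative bound between $u$ and $t$, both assumed $\geq T_0$, gives for $t\geq u$
\begin{equation*}
\ln\frac{\varphi(t)}{\varphi(u)}\leq\left(\tfrac{1}{\rho}+\delta\right)\ln(t/u),
\end{equation*}
and for $t\leq u$, using that $\ln(t/u)\leq0$,
\begin{equation*}
\ln\frac{\varphi(t)}{\varphi(u)}\leq\left(\tfrac{1}{\rho}-\delta\right)\ln(t/u).
\end{equation*}
In either case, setting $x:=t/u$ and writing $\alpha$ for the appropriate coefficient in $\{\tfrac{1}{\rho}\pm\delta\}$, we obtain $y_\sigma(u,t)\leq\alpha\ln x-\sigma x$, whose maximum over $x>0$ is $\alpha\ln(\alpha/\sigma)-\alpha$.

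It then remains to choose $\delta>0$ so small that both numbers $\left(\tfrac{1}{\rho}\pm\delta\right)\ln\!\bigl(\bigl(\tfrac{1}{\rho}\pm\delta\bigr)/\sigma\bigr)-\bigl(\tfrac{1}{\rho}\pm\delta\bigr)$ are at most $-\tfrac{1}{\rho}\ln(\e\rho\sigma')$; this is possible by continuity in $\delta$, since at $\delta=0$ both reduce to $-\tfrac{1}{\rho}\ln(\e\rho\sigma)$, which is strictly less than the target by the first paragraph. Setting $T_1:=T_0$ then finishes the argument, because $-\tfrac{1}{\rho}\ln(\e\rho\sigma')=-\tfrac{1}{\rho}\ln(\e\rho)-\tfrac{1}{\rho}\ln\sigma'$. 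The only subtle point is the sign bookkeeping between $t\geq u$ and $t\leq u$ when turning the two-sided bound of Lemma~\ref{VitesseDeVarphi} into a one-sided bound on $\ln(\varphi(t)/\varphi(u))$; everything else is routine calculus.
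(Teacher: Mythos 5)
Your argument is correct and essentially identical to the paper's: both integrate the derivative bound from Lemma~\ref{VitesseDeVarphi} to get $\ln\bigl(\varphi(t)/\varphi(u)\bigr)\leq\bigl(\tfrac{1}{\rho}\pm\delta\bigr)\ln(t/u)$ according to the sign of $\ln(t/u)$, then maximize $\alpha\ln v-\sigma v$ over $v=t/u>0$ and use continuity of the resulting expression in $\delta$ at $\delta=0$, where it equals $-\tfrac{1}{\rho}\ln(\e\rho\sigma)<-\tfrac{1}{\rho}\ln(\e\rho\sigma')$, to pick $\delta$ small enough. The sign bookkeeping you flagged as the only subtlety is handled the same way in the paper.
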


\begin{proof}
For every $\delta$ with $0<\delta<1/\rho$,
we can apply Lemma~\ref{VitesseDeVarphi} and get a constant $T_{\delta}$
such that
\begin{equation*}
\left(\frac{1}{\rho}-\delta\right)\frac{d}{dt}\ln t
<\frac{d}{d t}\ln\varphi(t)
<\left(\frac{1}{\rho}+\delta\right)\frac{d}{d t}\ln t
\end{equation*}
holds for $t\geq T_{\delta}$.

By applying the integration $\int_u^t(\cdot)d t$
to the right inequality in the case $t\geq u\geq T_{\delta}$
and to the left one in the case $u\geq t\geq T_{\delta}$,
we get
\begin{gather*}
y_{\sigma}(u,t)
\leq\Bigl(\frac{1}{\rho}+\delta\Bigr)\ln\frac{t}{u}-\sigma\frac{t}{u},
\quad\text{for $t\geq u\geq T_{\delta}$},
\\
y_{\sigma}(u,t)
\leq\Bigl(\frac{1}{\rho}-\delta\Bigr)\ln\frac{t}{u}-\sigma\frac{t}{u},
\quad\text{for $u\geq t\geq T_{\delta}$}.
\end{gather*}
Using the notations $v:=t/u$ and
\begin{equation*}
g_{\pm}(v):=\Bigl(\frac{1}{\rho}\pm\delta\Bigr)\ln v-\sigma v,
\quad\text{for $v>0$},
\end{equation*}
$y_{\sigma}(u,t)$ admits the estimate
\begin{equation*}
y_{\sigma}(u,t)\leq\max\{\sup_{v\geq 1}g_{+}(v),\sup_{0<v\leq 1}g_{-}(v)\},
\quad\text{for $u,t\geq T_{\delta}$}.
\end{equation*}
Since
\begin{equation*}
g_{\pm}'(v) = (\frac{1}{\rho}\pm\delta) \frac{1}{v}-\sigma=0
\ \Leftrightarrow \ 
v=v_{\pm}:=\frac{\frac{1}{\rho}\pm\delta}{\sigma}
=\frac{1}{\sigma \rho}\pm\frac{\delta}{\sigma}>0,
\end{equation*}
we have
\begin{equation*}
\sup_{v>0}g_{\pm}(v)=g_{\pm}(v_{\pm})
=\Bigl(\frac{1}{\rho}\pm\delta\Bigr)\ln\frac{1/\rho\pm\delta}{\e\sigma},
\end{equation*}
which implies
\begin{align*}
y_{\sigma}(u,t)+\frac{1}{\rho}\ln(\e\rho)
&\leq
\max_{\pm}\Bigl[
\bigl(\frac{1}{\rho}\pm\delta\bigr)\ln\frac{1/\rho\pm\delta}{\e\sigma}
+\frac{1}{\rho}\ln(\e\rho)
\Bigr]
\\
&\leq
\max_{\pm}\Bigl[
\bigl(\frac{1}{\rho}\pm\delta\bigr)
\ln\Bigl(\frac{1}{\sigma}\bigl(\frac{1}{\rho}\pm\delta\bigr)\Bigr)
-\bigl(\frac{1}{\rho}\pm\delta\bigr)
+\frac{1}{\rho}(1+\ln\rho)
\Bigr]
\\
&\leq
\frac{1}{\rho}
\max_{\pm}\Bigl[
\bigl(1\pm\delta\rho\bigr)
\ln\Bigl(\frac{1}{\sigma}\bigl(\frac{1}{\rho}\pm\delta\bigr)\Bigr)
\mp\delta\rho
+\ln\rho
\Bigr],
\end{align*}
for any $u,t\geq T_{\delta}$.

We can rewrite the estimate above as
\begin{equation*}
y_{\sigma}(u,t)+\frac{1}{\rho}\ln(\e\rho)
\leq\frac{1}{\rho}\max_{\pm}\tilde{y}(\pm\delta),
\end{equation*}
by defining
\begin{equation*}
\tilde{y}(\delta)
:=
\bigl(1+\delta\rho\bigr)
\ln\Bigl(\frac{1}{\sigma}\bigl(\frac{1}{\rho}+\delta\bigr)\Bigr)
-\delta\rho
+\ln\rho.
\end{equation*}
Note that for arbitrarily fixed $\rho$ and $\sigma$, the function
$\tilde{y}(\delta)$ is continuous in $\delta$ with $|\delta|<1/\rho$,
and satisfies $\tilde{y}(0)=-\ln\sigma$.
Therefore, for any given $\sigma'$ with $0<\sigma'<\sigma$, we have
$-\ln\sigma'>-\ln\sigma$, and we can choose a sufficiently small
$\delta:=\delta(\sigma')>0$ satisfying
\begin{equation*}
\max_{\pm}\tilde{y}(\pm\delta)\leq -\ln\sigma'.
\end{equation*}

Therefore, for any $\sigma'$ with $0<\sigma'<\sigma$,
we take such a choice of $\delta:=\delta(\sigma')$
and define $T_1:=T_{\delta(\sigma')}$.
Then, we have,
\begin{equation*}
y_{\sigma}(u,t)+\frac{1}{\rho}\ln(\e\rho)
\leq -\frac{1}{\rho}\ln\sigma',
\quad\text{for any $u,t\geq T_1$},
\end{equation*}
which concludes the proof.
\end{proof}

\section{Spaces of entire functions with growth given by a proximate order}
\label{sec:spac-entire-funct}

We use the standard notations around multi-indices.
For multi-indices 
$\alpha = (\alpha_1, \alpha_2, \cdots, \alpha_n)$,
$\beta = (\beta_1, \beta_2, \cdots, \beta_n) \in \NN^n$
with $\NN := \{ 0, 1, 2, 3, \cdots \}$
and $z = (z_1, z_2, \cdots, z_n) \in \CC^n$,
we set: 
\begin{gather*}
|\alpha| := \alpha_1 + \alpha_2 + \cdots + \alpha_n, \ 
\alpha ! := \alpha_1 ! \alpha_2 ! \cdots  \alpha_n !, \ 
\binom{\alpha}{\beta}
:= 
\frac{\alpha !}{(\alpha - \beta)! \beta !}, \\
|z| := \sqrt{|z_1|^2 + |z_2|^2 + \cdots + |z_n|^n}, \ 
\partial_z^{\alpha} 
:= \frac{\partial^{|\alpha|}}{\partial z_1^{\alpha_1} 
\partial z_2^{\alpha_2} \cdots \partial z_n^{\alpha_n}}.
\end{gather*}

Let $\varrho(r)$ be a proximate order for a positive order $\rho>0$.
For any $\sigma > 0$, we consider the Banach space
\begin{equation}\label{Arhosigma}
A_{\varrho,\sigma}
:=\{ f\in\calO(\CC^n) \mid
\norm{f}_{\varrho,\sigma}
:=\sup_{z\in\CC^n}\abs{f(z)}\exp(-\sigma\abs{z}^{\varrho(\abs{z})}) < \infty \}
\end{equation}
with the norm $\norm{\cdot}_{\varrho,\sigma}$.

In the sequel, for the simplicity, we will sometimes write
$\displaystyle{w_{\sigma} (z) := \sigma \abs{z}^{\varrho(|z|)}}$.
By the meaning of \cite[the proof of Lemme 1]{Mr},
we have:
\begin{lem}\label{compacticite}
If $\sigma_2 > \sigma_1 > 0$, the inclusion map 
$A_{\varrho,\sigma_1} \hookrightarrow A_{\varrho,\sigma_2}$ 
is compact.
\end{lem}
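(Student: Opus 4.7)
The plan is to prove the classical compactness by combining Montel's theorem on compact convergence of holomorphic functions with the strict inequality $\sigma_1<\sigma_2$, which provides decay of the weight at infinity.

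First I would take an arbitrary bounded sequence $(f_k)_k$ in $A_{\varrho,\sigma_1}$, say $\norm{f_k}_{\varrho,\sigma_1}\leq C$ for all $k$. The pointwise estimate $\abs{f_k(z)}\leq C\exp(\sigma_1\abs{z}^{\varrho(\abs{z})})$ shows that $(f_k)_k$ is locally uniformly bounded on $\CC^n$. By Montel's theorem, one can extract a subsequence, still denoted $(f_k)_k$, converging to some entire function $f\in\calO(\CC^n)$ uniformly on every compact subset of $\CC^n$. Passing to the pointwise limit in the bound yields $\abs{f(z)}\leq C\exp(\sigma_1\abs{z}^{\varrho(\abs{z})})$, so $f\in A_{\varrho,\sigma_1}\subset A_{\varrho,\sigma_2}$.

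Next I would show $\norm{f_k-f}_{\varrho,\sigma_2}\to 0$. Fix $\eps>0$. For any $R>0$ and $\abs{z}\geq R$, the triangle inequality and the uniform bound give
\begin{equation*}
\abs{f_k(z)-f(z)}\exp(-\sigma_2\abs{z}^{\varrho(\abs{z})})
\leq 2C\exp\bigl((\sigma_1-\sigma_2)\abs{z}^{\varrho(\abs{z})}\bigr).
\end{equation*}
Since $\sigma_2>\sigma_1$ and $\abs{z}^{\varrho(\abs{z})}\to+\infty$ as $\abs{z}\to\infty$ (a consequence of $\rho>0$, recalled before Lemma~\ref{lem:pseudo-subadditivity}), we can choose $R$ large enough that the right-hand side is less than $\eps$ for all $\abs{z}\geq R$ and all $k$. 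With this $R$ fixed, the uniform convergence on the compact set $\{\abs{z}\leq R\}$ ensures that for $k$ sufficiently large, $\sup_{\abs{z}\leq R}\abs{f_k(z)-f(z)}<\eps$, whence the weighted supremum on $\{\abs{z}\leq R\}$ is also at most $\eps$. Combining both regions gives $\norm{f_k-f}_{\varrho,\sigma_2}\leq\eps$ for large $k$.

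The only delicate point is that the estimate at infinity must hold uniformly in $k$, which is exactly what the $A_{\varrho,\sigma_1}$-boundedness of the sequence provides; the gap $\sigma_2-\sigma_1>0$ is essential since it turns the uniform bound into a genuine decay. No deeper property of the proximate order is needed beyond $\abs{z}^{\varrho(\abs{z})}\to+\infty$, so this proof is really just Montel plus the strict inequality of weights.
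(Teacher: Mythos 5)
Your proof is correct and follows essentially the same route as the paper. Both arguments extract a subsequence converging uniformly on compacts (you via Montel, the paper via Ascoli--Arzel\`a applied after an explicit gradient bound from Cauchy's inequality --- these are the same tool packaged differently), and both then split $\CC^n$ into $\{\abs{z}\le R\}$ and $\{\abs{z}>R\}$, using the strict gap $\sigma_2>\sigma_1$ to make the weight $\exp((\sigma_1-\sigma_2)\abs{z}^{\varrho(\abs{z})})$ small at infinity uniformly in $k$. The only cosmetic difference is that you identify the limit $f$ and show $f_k\to f$ in $A_{\varrho,\sigma_2}$ directly, whereas the paper verifies the Cauchy property and invokes completeness of $A_{\varrho,\sigma_2}$; either phrasing is fine.
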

\begin{proof}
Set the unit ball with respect to $w_{\sigma_1}$:
$B := \{ f \in A_{\varrho,\sigma_1} \mid \norm{f}_{\varrho,\sigma_1} \leq 1 \}$ 
and we will show $B$ is relatively compact in $A_{\varrho,\sigma_2}$: 
for this, it is sufficient to prove
that any sequence $\{ f_k \} \subset B$  has an accumulation point.

First remark that $B$ is compact in $\calC(\CC^n)$ 
by the Ascoli-Arzel\`a Theorem:
in fact, for any compact convex set $K \subset \CC^n \simeq \RR^{2n}$ and $f \in B$, 
for $\forall x,y\in K$, we have
\begin{equation*}
|f(x)-f(y)| \leq C \cdot |x-y|
\end{equation*}
with $C := \sup_{u\in K} |\nabla f(u)|$. 
As $f \in B \subset A_{\varrho,\sigma_2} \subset \calO(\CC^n)$,
by the Cauchy's inequality, we have $\exists C_K > 0$ 
such that $\sup_{x\in K} |\nabla f(x)| \leq C_K$ $(f\in B)$
and thus $B$ is equicontinuous in $\calC(\CC^n)$.
As $B$ is also bounded in $\calC(\CC^n)$, 
we have the conclusion.

Therefore, (replacing with a sub-sequence),
we may assume $\{ f_k \} \subset B$ converges to $f \in B$ in $\calC(\CC^n)$
 with the compact convergence topology. 
\par
Next we will show $\forall \delta > 0$, $\exists R > 0$ such that
\begin{equation*}
\e^{w_{\sigma_1}(z)-w_{\sigma_2}(z)} < \frac{\delta}{2},
\quad\text{for $|z| > R$}.
\end{equation*}
This follows from
\begin{equation*}
w_{\sigma_2}(z) - w_{\sigma_1}(z)
= (\sigma_2 - \sigma_1)\abs{z}^{\varrho(|z|)} \rightarrow \infty
\quad\text{as $|z| \rightarrow \infty$}.
\end{equation*}
Thus we have for $|z| > R$,
\begin{align*}
|f_k(z)-f_l(z)| \e^{-w_{\sigma_2}(z)}
&= |f_k(z)-f_l(z)| \e^{-w_{\sigma_1}(z)} \cdot \e^{w_{\sigma_1}(z)-w_{\sigma_2}(z)}\\
&\leq \norm{f_k - f_l}_{\varrho,\sigma_1} \frac{\delta}{2}
\\
&\leq 2\cdot \frac{\delta}{2} = \delta.
\end{align*}
We know, from the convergence in $\calC(\CC^n)$
that
\begin{equation*}
\exists N > 0,\,
\forall k\geq \forall l\geq N,\,
\sup_{|z|\leq R} |f_k(z)-f_l(z)| \e^{-w_{\sigma_2}(z)}
\leq
\sup_{|z|\leq R} |f_k(z)-f_l(z)|
 < \delta.
\end{equation*}
Therefore for all $z \in \CC^n$,
we have $|f_k(z)-f_l(z)| \e^{-w_{\sigma_2}(z)} < \delta$,
or
\begin{equation*}
\norm{f_k - f_l}_{\varrho,\sigma_2} <\delta,
\end{equation*}
for $k \geq l \geq N$.
As $A_{\varrho,\sigma_2}$ is a Banach space, 
we have $f_k \to f$ in $A_{\varrho,\sigma_2}$.
\end{proof}

\begin{defn}[The spaces $A_{\varrho}$ and $A_{\varrho,+0}$]
\label{def:spaces-Arho-Arho0}
We define the space
\begin{equation}\label{calE}
A_{\varrho}
:=
\varinjlim_{\sigma>0}
A_{\varrho,\sigma}
\end{equation}
of entire functions at most {\em of normal type}
with respect to the proximate order $\varrho(r)$.
By the preceding Lemma~\ref{compacticite},
this space is a (DFS)-space,
which can be seen by taking an increasing sequence $(\sigma_j)$
tending to $+\infty$ instead of all $\sigma>0$.
See \cite[Definition 2.2.1 and Section 2.6]{B-G} for the one-variable case.
We also define, as in \cite{Mr},
\begin{equation*}
A_{\varrho,\sigma+0}
:=
\varprojlim_{\eps>0}
A_{\varrho,\sigma+\eps}
\end{equation*}
the locally convex space of entire functions at most of type $\sigma \geq 0$
with respect to a proximate order $\varrho(r)$,
which is (by taking a decreasing sequence $(\eps_j)$ tending to $0$ instead of all $\eps > 0$),
an (FS)-space by the preceding Lemma~\ref{compacticite}.
In particular, if $\sigma=0$, the space $A_{\varrho,+0}$ is
the locally convex space of entire functions at most {\em of minimal type}
with respect to the proximate order $\varrho(r)$.
\end{defn}

Let $\hat{\varrho}(r)$ be a normalization of $\varrho(r)$.
Throughout this section, we fix it and
define $\varphi(t)$ and $G_{\hat{\varrho},q}$ according to $\hat{\varrho}$.

Then, for any $\sigma>0$, the Banach space $A_{\hat{\varrho},\sigma}$
coincides with $A_{\varrho,\sigma}$
as subspaces of $\calO(\CC^n)$,
and $\norm{\cdot}_{\hat{\varrho},\sigma}$
becomes an equivalent norm with $\norm{\cdot}_{\varrho,\sigma}$.
In fact, it follows from \eqref{eq:rhohat-1},
that
\begin{equation}
\label{eq:diff-rhohat-rho}
c(\varrho,\hat{\varrho}):=\sup_{r>0}\abs{r^{\hat{\varrho}(r)}-r^{\varrho(r)}}
<+\infty,
\end{equation}
which implies
\begin{equation*}
\e^{-\sigma c(\varrho,\hat{\varrho})}\norm{f}_{\hat{\varrho},\sigma}
\leq\norm{f}_{\varrho,\sigma}
\leq
\e^{\sigma c(\varrho,\hat{\varrho})}\norm{f}_{\hat{\varrho},\sigma}.
\end{equation*}
Therefore, $A_{\hat{\varrho},\sigma}$ and $A_{\varrho,\sigma}$ are homeomorphic.

As a conclusion, the spaces $A_{\varrho}$ and $A_{\hat{\varrho}}$ coincide
and they share the same locally convex topologies as well,
and the same holds for $A_{\varrho,\sigma+0}$ and $A_{\hat{\varrho},\sigma+0}$.
In particular, an entire function is
of type $\sigma$ with respect to $\varrho(r)$
if and only if it is of the same type
with respect to a normalization of $\varrho(r)$.

We recall a theorem of \cite{Ll-G}:
\begin{thm}[{\cite[Theorem 1.23]{Ll-G}}]\label{type de f}
Let $f(z)=\sum f_{\alpha}z^{\alpha}\in\calO(\CC^n)$
be an entire function of finite order $\rho>0$ and of proximate order
$\varrho(r)$.
Then its type $\sigma$ with respect to $\varrho(r)$ is given by
\begin{equation}\label{type sigma}
\frac{1}{\rho} \ln \sigma
= \limsup_{q \to \infty}
\left(\frac{1}{q} \ln K_q + \ln \varphi(q) \right)
 - \frac{1}{\rho} - \frac{\ln \rho}{\rho}
\end{equation}
where $K_q := \sup_{|z|\leq 1} \abs{\sum_{\abs{\alpha}=q}f_{\alpha} z^{\alpha}}$.
\end{thm}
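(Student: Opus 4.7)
The plan is to establish the equivalent reformulation $A = \tfrac{1}{\rho}\ln(e\rho\sigma)$, where $A := \limsup_{q\to\infty}\bigl[\tfrac{1}{q}\ln K_q + \ln\varphi(q)\bigr]$, by proving the two inequalities separately. Since a normalization $\hat{\varrho}$ agrees with $\varrho$ for large $r$ (so $\varphi$ is unchanged for $t\gg 1$) and preserves the type as shown earlier in this section, I reduce to the case $\varrho=\hat{\varrho}$. I also use the standard consequence of the maximum principle that $\sigma=\limsup_{r\to\infty}\ln M(r,f)/r^{\varrho(r)}$ with $M(r,f):=\sup_{|z|\le r}|f(z)|$, and decompose $f=\sum_q P_q$ into homogeneous parts $P_q(z):=\sum_{|\alpha|=q}f_{\alpha}z^{\alpha}$, so that $K_q=\sup_{|z|=1}|P_q(z)|$ by homogeneity.

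For the direction $A\le\tfrac{1}{\rho}\ln(e\rho\sigma)$, fix $\sigma'>\sigma$ so that $M(r,f)\le C_0 e^{\sigma' r^{\varrho(r)}}$ for $r$ large. Cauchy's inequality applied to $\lambda\mapsto f(\lambda z_0)$ gives $|P_q(z_0)|\le M(R,f)/R^q$ for $|z_0|=1$ and every $R>0$, hence $K_q\le M(R,f)/R^q$. Substituting $R=\varphi(u)$ (so $R^{\varrho(R)}=u$) and choosing the saddle point $u=u_q:=q/(\rho\sigma')$ yields
\begin{equation*}
\frac{1}{q}\ln K_q+\ln\varphi(q)\le\frac{\ln C_0}{q}+\frac{1}{\rho}+\ln\frac{\varphi(q)}{\varphi(u_q)}.
\end{equation*}
Integrating the appropriate inequality of Lemma~\ref{VitesseDeVarphi} between $u_q$ and $q$ (selecting the direction by the sign of $\ln(\rho\sigma')$) bounds the last term by $\bigl(\tfrac{1}{\rho}\pm\delta\bigr)\ln(\rho\sigma')$, so the full right-hand side converges to $\tfrac{1}{\rho}\ln(e\rho\sigma')+O(\delta)$. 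Letting $q\to\infty$, then $\delta\to 0$ and $\sigma'\to\sigma^+$, gives $A\le\tfrac{1}{\rho}\ln(e\rho\sigma)$.

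For the reverse direction, fix $\epsilon>0$ so that $K_q\le e^{q(A+\epsilon)}/\varphi(q)^q$ for $q\ge q_0$, and bound $M(r,f)\le\sum_q K_q r^q$. Writing $r=\varphi(u)$, each summand is at most $\exp\bigl(q\bigl[(A+\epsilon)-\ln(\varphi(q)/\varphi(u))\bigr]\bigr)$. Choose $\sigma_L'>e^{\rho(A+\epsilon)-1-\ln\rho}$ and $\sigma_L>\sigma_L'$; applying Lemma~\ref{lem:y-u-t} with the substitution $(u_{\text{lem}},t_{\text{lem}})=(q,u)$ yields, uniformly for $u,q\ge T_1$,
\begin{equation*}
\ln\frac{\varphi(q)}{\varphi(u)}\ge\frac{1}{\rho}\ln(e\rho\sigma_L')-\sigma_L\frac{u}{q}.
\end{equation*}
Hence the $q$-th summand is $\le e^{\sigma_L u}\cdot\exp\bigl(q\bigl[(A+\epsilon)-\tfrac{1}{\rho}\ln(e\rho\sigma_L')\bigr]\bigr)$, with a strictly negative coefficient of $q$ by construction. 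Summing the geometric tail gives $M(r,f)\le C_0+C_1 e^{\sigma_L r^{\varrho(r)}}$, so the type of $f$ is at most $\sigma_L$. Letting $\sigma_L\downarrow e^{\rho(A+\epsilon)-1-\ln\rho}$ and then $\epsilon\downarrow 0$ gives $\sigma\le e^{\rho A}/(e\rho)$, i.e.\ $A\ge\tfrac{1}{\rho}\ln(e\rho\sigma)$.

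The main obstacle is almost entirely administrative rather than analytic, since the hard asymptotic work has already been done in Lemmas~\ref{VitesseDeVarphi} and~\ref{lem:y-u-t}: one must coordinate the auxiliary parameters $\sigma',\sigma_L,\sigma_L',\delta,\epsilon$ so that all domain conditions ($u_q,q\ge T_1$ or $T_\delta$, the large-$r$ regime for the type bound, etc.) hold simultaneously, and one must handle both cases $\rho\sigma'\gtrless 1$ in the saddle-point step by picking the correct branch of Lemma~\ref{VitesseDeVarphi}. Once these bookkeeping matters are in place, each direction reduces to essentially one computation.
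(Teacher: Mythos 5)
The paper does not prove this theorem: it is quoted from Lelong--Gruman (their Theorem 1.23), so there is no in-paper argument to compare against, and your reconstruction stands on its own. It is correct, and it is essentially the classical coefficient characterization of type. In the direction $A\le\tfrac{1}{\rho}\ln(e\rho\sigma)$, the Cauchy estimate $K_q\le M(R,f)/R^q$ with the saddle-point choice $R=\varphi(q/(\rho\sigma'))$, followed by integrating Lemma~\ref{VitesseDeVarphi} over $[u_q,q]$ or $[q,u_q]$ according to the sign of $\ln(\rho\sigma')$, gives exactly $\ln\bigl(\varphi(q)/\varphi(u_q)\bigr)\le(\tfrac{1}{\rho}\pm\delta)\ln(\rho\sigma')$, and the rest follows by letting $q\to\infty$, $\delta\to 0$, $\sigma'\to\sigma$. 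In the reverse direction, bounding $M(r,f)\le\sum_q K_q r^q$ and invoking Lemma~\ref{lem:y-u-t} with the variables $(q,u)$ does produce $\ln\bigl(\varphi(q)/\varphi(u)\bigr)\ge\tfrac{1}{\rho}\ln(e\rho\sigma_L')-\sigma_L u/q$, and your constraint $\sigma_L'>e^{\rho(A+\epsilon)-1-\ln\rho}$ makes the exponent $(A+\epsilon)-\tfrac{1}{\rho}\ln(e\rho\sigma_L')$ strictly negative, so the geometric tail converges and the type bound $\sigma\le\sigma_L$ follows. The items you flagged as administrative truly are: the finitely many terms $q<\max(q_0,T_1)$ of $\sum_q K_q r^q$ form a polynomial that is dominated by any $e^{\sigma_L r^{\varrho(r)}}$, the thresholds $q,u_q\ge T_\delta$ and $q,u\ge T_1$ hold for $q$ large since $u_q\to\infty$, and the degenerate cases $\sigma\in\{0,\infty\}$ (equivalently $A\in\{\pm\infty\}$) are covered by the usual conventions. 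It is worth noting that Lemma~\ref{VitesseDeVarphi} is itself extracted in this paper from Lelong--Gruman's proof of this very theorem, so your route is essentially reconstructing that original argument using the paper's own intermediate lemmas, which is the natural thing to do here.
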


Using the above Theorem, we have several characterizations
for an entire function to belong to the spaces
given in Definition~\ref{def:spaces-Arho-Arho0}.
For the proof of Lemma~\ref{EstDeCoeffDef},
Corollary~\ref{EstimationDeChaqueCoeff}
and Proposition~\ref{prop:taylor-charact}
below,
we refer to \cite[Section 3]{I-J}.
\begin{lem}[{\cite[Lemma 2]{I-J}}]\label{EstDeCoeffDef}
An entire function $f(z) = \sum f_{\alpha} z^{\alpha} \in \calO(\CC^n)$
belongs to $A_{\varrho,\sigma+0}$ if and only if we have
\begin{equation}\label{CaracterisationParPrtieHomogene}
\limsup_{q \to \infty} \left( K_{q} G_{\hat{\varrho}, q} \right)^{\rho/q} \leq \sigma
\end{equation}
where $K_q := \sup_{|z|\leq 1} \abs{\sum_{\abs{\alpha}=q}f_{\alpha} z^{\alpha}}$. 
\end{lem}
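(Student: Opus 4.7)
The plan is to derive the lemma directly from Theorem~\ref{type de f} by rewriting the Lindelöf-type formula for the type in terms of $G_{\hat{\varrho},q}$, using that $A_{\varrho,\sigma+\eps}=A_{\hat{\varrho},\sigma+\eps}$ (with equivalent norms) as noted right before Theorem~\ref{type de f}.

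First I would unfold the definition of $A_{\varrho,\sigma+0}=\varprojlim_{\eps>0}A_{\varrho,\sigma+\eps}$ to observe that $f\in A_{\varrho,\sigma+0}$ is equivalent to the assertion that the type $\tau$ of $f$ with respect to $\varrho$ satisfies $\tau\leq\sigma$. By the agreement of the two proximate orders outside a compact set (property \eqref{eq:rhohat-1}) together with the bound \eqref{eq:diff-rhohat-rho}, this type coincides with the type $\tau$ of $f$ with respect to the normalization $\hat{\varrho}$, so I may apply Theorem~\ref{type de f} with $\varrho$ replaced by $\hat{\varrho}$.

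The core of the argument is then the algebraic identity
\begin{equation*}
\frac{\rho}{q}\ln\bigl(K_q\, G_{\hat{\varrho},q}\bigr)
=\rho\Bigl(\frac{1}{q}\ln K_q+\ln\varphi(q)\Bigr)-\bigl(1+\ln\rho\bigr),
\end{equation*}
which follows immediately from the definition \eqref{G-rho-q} of $G_{\hat{\varrho},q}$. Taking $\limsup_{q\to\infty}$ of both sides and comparing with \eqref{type sigma} gives
\begin{equation*}
\ln\tau=\rho\Bigl[\limsup_{q\to\infty}\Bigl(\frac{1}{q}\ln K_q+\ln\varphi(q)\Bigr)\Bigr]-1-\ln\rho
=\limsup_{q\to\infty}\ln\bigl(K_q G_{\hat{\varrho},q}\bigr)^{\rho/q}.
\end{equation*}
Since $\ln$ is continuous and strictly increasing on $(0,\infty)$, $\limsup\ln x_q=\ln\limsup x_q$ for positive sequences, and the condition $\tau\leq\sigma$ is then precisely \eqref{CaracterisationParPrtieHomogene}.

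The only genuine point to handle with care is the boundary case $\sigma=0$ (minimal type) and, more generally, the case where $f$ has order strictly less than $\rho$, which is not literally covered by the hypothesis of Theorem~\ref{type de f}; in both cases one interprets $\tau=0$ as the limsup in \eqref{type sigma} being $-\infty$, which corresponds to $\limsup(K_q G_{\hat{\varrho},q})^{\rho/q}=0$, so the equivalence persists. I do not expect any substantial obstacle: once the identification of $A_{\varrho,\sigma+0}$ with functions of type $\leq\sigma$ is made explicit, the lemma is a direct algebraic restatement of Theorem~\ref{type de f}.
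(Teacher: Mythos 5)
The paper does not include its own proof of this lemma; it says ``Using the above Theorem, we have several characterizations $\dots$'' and refers the reader to \cite[Section~3]{I-J}. Your proof implements exactly what that remark indicates: unfold $A_{\varrho,\sigma+0}=\varprojlim_\eps A_{\varrho,\sigma+\eps}$ to identify membership with the type bound $\tau\le\sigma$, reduce to the normalization $\hat\varrho$ via \eqref{eq:rhohat-1}--\eqref{eq:diff-rhohat-rho}, and then rewrite \eqref{type sigma} using the definition \eqref{G-rho-q} of $G_{\hat\varrho,q}$. The algebraic identity
\begin{equation*}
\frac{\rho}{q}\ln\bigl(K_q G_{\hat{\varrho},q}\bigr)
=\rho\Bigl(\frac{1}{q}\ln K_q+\ln\varphi(q)\Bigr)-(1+\ln\rho)
\end{equation*}
is correct, and passing $\ln$ through the $\limsup$ is legitimate since $\ln$ is increasing and continuous on $[0,\infty]$ with the usual conventions. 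So the core of the argument is sound and is, as far as the text indicates, the same route the authors have in mind.

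The one place that deserves more than the hand-wave you give it is the applicability of Theorem~\ref{type de f}. As stated in the paper, that theorem assumes $f$ is ``of finite order $\rho>0$ and of proximate order $\varrho(r)$'', which in the standard terminology means $f$ has order exactly $\rho$ and finite positive type. For the ``if'' direction you must argue from the bound $\limsup_q(K_q G_{\hat\varrho,q})^{\rho/q}\le\sigma$ that $f$ actually lies in $A_{\varrho,\sigma+\eps}$, and you cannot invoke Theorem~\ref{type de f} until you already know $f$ has order $\le\rho$; for the ``only if'' direction you need to cover polynomials, functions of order strictly less than $\rho$, and functions of order $\rho$ but of minimal type, none of which literally satisfy the stated hypothesis. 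Your interpretation (that the $\limsup$ then equals $-\infty$, i.e.\ the right side of \eqref{type sigma} is $-\infty$, hence $\sigma=0$, so the equivalence persists) is the right intuition, and it matches the more precise version of Lelong--Gruman's Theorem 1.23, but as written it is an assertion rather than a verification. To make the proof self-contained one should either quote the Lelong--Gruman theorem in the form that directly expresses the type $\tau\in[0,\infty]$ for an arbitrary entire function, or handle the degenerate cases by a direct Cauchy-estimate argument. You flag this yourself, and it is the only real gap; everything else is a correct algebraic restatement.
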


\begin{cor}[{\cite[Corollary 1]{I-J}}]\label{EstimationDeChaqueCoeff}
If $f(z) = \sum f_{\alpha} z^{\alpha} \in \calO(\CC^n)$
belongs to $f\in A_{\varrho,\sigma+0}$, then we have
\begin{equation}\label{CaracterisationParCoefficients}
\limsup_{q \to \infty}
\Bigl(\max_{|\alpha|=q}|f_{\alpha}| G_{\hat{\varrho}, q}\Bigr)^{\rho/q}
\leq \sqrt{n}^{\rho} \sigma.
\end{equation}
Conversely, if $\{ f_{\alpha} \}$ satisfies \eqref{CaracterisationParCoefficients},
then we have  $f(z) \in A_{\varrho,\sqrt{n}^{\rho}\sigma+0}$.
\end{cor}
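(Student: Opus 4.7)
The plan is to deduce this corollary from Lemma~\ref{EstDeCoeffDef} by comparing the two quantities
\[
K_q=\sup_{|z|\le 1}\bigl|\sum_{|\alpha|=q}f_\alpha z^\alpha\bigr|
\quad\text{and}\quad
M_q:=\max_{|\alpha|=q}|f_\alpha|,
\]
so that once we bound each in terms of the other, the condition on $K_q$ in \eqref{CaracterisationParPrtieHomogene} can be translated into a condition on $M_q$ and vice versa. The combinatorial/geometric bridge between them is the inclusion of the polydisc of radius $1/\sqrt{n}$ into the Euclidean unit ball.

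For the first (necessary) direction, I would fix $q$, set $P_q(z):=\sum_{|\alpha|=q}f_\alpha z^\alpha$, and apply the Cauchy estimate on the polydisc $\{|z_i|=1/\sqrt{n}\}_{i=1,\dots,n}$, which is contained in $\{|z|\le 1\}$. Since $P_q$ is homogeneous of degree $q$, Cauchy's inequality gives $|f_\alpha|\le(\sqrt{n})^{|\alpha|}\sup_{|z_i|\le 1/\sqrt{n}}|P_q(z)|\le\sqrt{n}^{\,q}K_q$, hence $M_q\le\sqrt{n}^{\,q}K_q$. Raising $M_qG_{\hat{\varrho},q}$ to the power $\rho/q$ and taking $\limsup$ then yields
\[
\limsup_{q\to\infty}\bigl(M_qG_{\hat{\varrho},q}\bigr)^{\rho/q}
\le\sqrt{n}^{\,\rho}\limsup_{q\to\infty}\bigl(K_qG_{\hat{\varrho},q}\bigr)^{\rho/q}
\le\sqrt{n}^{\,\rho}\sigma
\]
by Lemma~\ref{EstDeCoeffDef}, which is \eqref{CaracterisationParCoefficients}.

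For the converse, I would use the trivial estimate on the unit ball: for $|z|\le 1$ one has $|z^\alpha|\le 1$, hence
\[
K_q\le\sum_{|\alpha|=q}|f_\alpha|\le M_q\cdot\#\{\alpha\in\NN^n:|\alpha|=q\}
=M_q\binom{n+q-1}{n-1}.
\]
Since $\binom{n+q-1}{n-1}$ is a polynomial of degree $n-1$ in $q$, its $(\rho/q)$-th power tends to $1$ as $q\to\infty$, so that
\[
\limsup_{q\to\infty}\bigl(K_qG_{\hat{\varrho},q}\bigr)^{\rho/q}
\le\limsup_{q\to\infty}\bigl(M_qG_{\hat{\varrho},q}\bigr)^{\rho/q}
\le\sqrt{n}^{\,\rho}\sigma.
\]
Applying Lemma~\ref{EstDeCoeffDef} in the opposite direction with $\sigma$ replaced by $\sqrt{n}^{\,\rho}\sigma$ then shows that $f\in A_{\varrho,\sqrt{n}^{\,\rho}\sigma+0}$, which finishes the proof.

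Neither step involves a real obstacle: Cauchy's inequality on the inscribed polydisc takes care of the nontrivial $\sqrt{n}^{\,\rho}$ factor in one direction, and the polynomial growth of the number of multi-indices of fixed length is absorbed after taking the $q$-th root in the other. The only point that needs a little care is making sure the polydisc radius is chosen so that the polydisc sits inside the Euclidean unit ball (so that $K_q$ controls the polydisc supremum), which is exactly what $r=1/\sqrt{n}$ accomplishes and which is the source of the $\sqrt{n}^{\,\rho}$ constant.
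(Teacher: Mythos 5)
Your proof is correct, and it is the natural argument behind the cited \cite[Corollary 1]{I-J}: the paper itself does not include a proof, but simply refers to I--J, Section~3, and the standard derivation from Lemma~\ref{EstDeCoeffDef} proceeds exactly as you describe. The Cauchy estimate over the inscribed polydisc of polyradius $1/\sqrt{n}$ gives $\max_{|\alpha|=q}|f_\alpha|\le\sqrt{n}^{\,q}K_q$, producing the $\sqrt{n}^{\rho}$ constant in the necessary direction, while the crude bound $K_q\le{}_nH_q\max_{|\alpha|=q}|f_\alpha|$ and the fact that $({}_nH_q)^{\rho/q}\to 1$ (cf.~\eqref{eq:multi-choose}) handle the converse, both being direct translations into \eqref{CaracterisationParPrtieHomogene}.
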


By the corollary, we have
\begin{prop}[{\cite[Proposition 1]{I-J}}]\label{prop:taylor-charact}
An entire function $f(z) = \sum f_{\alpha} z^{\alpha} \in \calO(\CC^n)$
belongs to $A_{\varrho}$ if and only if we have
\begin{equation}\label{CarParCoefficientsPourCalE}
\limsup_{q \to \infty}
\Bigl(\max_{|\alpha|=q}|f_{\alpha}| G_{\hat{\varrho},q}\Bigr)^{\rho/q} < \infty.
\end{equation}
\end{prop}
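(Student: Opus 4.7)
The plan is to read off this proposition as an almost direct consequence of Corollary~\ref{EstimationDeChaqueCoeff} combined with the definition of $A_\varrho$ as the inductive limit $\varinjlim_{\sigma>0}A_{\varrho,\sigma}$. The proposition is essentially the ``minimal type'' characterization in Corollary~\ref{EstimationDeChaqueCoeff} spread out over all finite types; no new analytic ingredient is needed. Both directions will use the elementary sandwich $A_{\varrho,\sigma}\subset A_{\varrho,\sigma+0}\subset A_{\varrho,\sigma+1}\subset A_{\varrho}$.

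For the forward implication, I would start by assuming $f\in A_{\varrho}$, which by the definition of the inductive limit provides some $\sigma>0$ with $f\in A_{\varrho,\sigma}$. Since $\|f\|_{\varrho,\sigma+\eps}\leq\|f\|_{\varrho,\sigma}<\infty$ for every $\eps>0$, the function $f$ belongs to $\bigcap_{\eps>0}A_{\varrho,\sigma+\eps}=A_{\varrho,\sigma+0}$. Then the first half of Corollary~\ref{EstimationDeChaqueCoeff} yields
\[
\limsup_{q\to\infty}\Bigl(\max_{|\alpha|=q}|f_{\alpha}|\,G_{\hat{\varrho},q}\Bigr)^{\rho/q}\leq\sqrt{n}^{\rho}\sigma<\infty,
\]
which is exactly \eqref{CarParCoefficientsPourCalE}.

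For the reverse implication, I would suppose the limsup in \eqref{CarParCoefficientsPourCalE} equals some $L<\infty$, and then choose any $\sigma>0$ large enough that $\sqrt{n}^{\rho}\sigma>L$. With this $\sigma$ the hypothesis \eqref{CaracterisationParCoefficients} of the converse part of Corollary~\ref{EstimationDeChaqueCoeff} is satisfied, giving $f\in A_{\varrho,\sqrt{n}^{\rho}\sigma+0}$. Since $A_{\varrho,\sqrt{n}^{\rho}\sigma+0}\subset A_{\varrho,\sqrt{n}^{\rho}\sigma+1}\subset A_{\varrho}$ by the inductive limit construction, we conclude $f\in A_{\varrho}$.

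There is no real obstacle to overcome: the content of the proposition has already been distilled into Corollary~\ref{EstimationDeChaqueCoeff}, and what remains is only the bookkeeping of matching the $\limsup\leq\sqrt{n}^{\rho}\sigma$ condition of that corollary with the mere finiteness of the limsup here. The only point worth flagging is that the factor $\sqrt{n}^{\rho}$, which is unavoidable in the several-variables passage between the homogeneous-part norm $K_q$ and the coefficientwise maximum $\max_{|\alpha|=q}|f_\alpha|$, disappears harmlessly because the proposition only demands finiteness rather than a sharp value of the limsup.
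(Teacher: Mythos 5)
Your proof is correct and takes exactly the route the paper intends: the paper introduces this proposition with the single phrase ``By the corollary, we have'' and then refers to \cite[Section 3]{I-J} for details, so the content is precisely the bookkeeping you supply, matching the $\limsup\leq\sqrt{n}^{\rho}\sigma$ bound in Corollary~\ref{EstimationDeChaqueCoeff} (in both directions) with mere finiteness of the limsup, using $A_{\varrho,\sigma}\subset A_{\varrho,\sigma+0}$ and $A_{\varrho,\sigma+0}\subset A_{\varrho}$ from the inductive-limit definition.
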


We prepare some estimates of norms of monomials.
(C.f. \cite[Lemma 5.1]{I-M}).
\begin{lem}\label{normDUnMonome2}
Suppose $\rho>0$.
For any $\sigma$ and $\sigma'$ with $0<\sigma'<\sigma$,
there exists $C > 0$ such that for any $\beta\in\NN^n$, we have 
\begin{equation}\label{eq:norm-monomial}
\norm{z^{\beta}}_{\varrho,\sigma}
\leq C {\sigma'}^{-\abs{\beta}/\rho} G_{\hat{\varrho},|\beta|}.
\end{equation}
\end{lem}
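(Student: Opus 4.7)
The plan is to reduce the estimate to a single-variable optimization and then apply Lemma~\ref{lem:y-u-t} with $u=|\beta|$.

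First, by \eqref{eq:diff-rhohat-rho} the norms $\|\cdot\|_{\varrho,\sigma}$ and $\|\cdot\|_{\hat{\varrho},\sigma}$ differ only by a multiplicative constant, so I can replace $\varrho$ by its normalization $\hat{\varrho}$ throughout and prove the estimate with $\hat{\varrho}$ in place of $\varrho$. Since $|z^{\beta}|\leq|z|^{|\beta|}$, writing $q:=|\beta|$ gives
\begin{equation*}
\norm{z^{\beta}}_{\hat{\varrho},\sigma}
\leq \sup_{r>0} r^{q}\exp(-\sigma r^{\hat{\varrho}(r)}).
\end{equation*}
Using that $t\mapsto r=\varphi(t)$ is the inverse of $r\mapsto t=r^{\hat{\varrho}(r)}$ and maps $(0,\infty)$ onto $(0,\infty)$ by \eqref{eq:rhohat-2}, the substitution $t=r^{\hat{\varrho}(r)}$ converts this into
\begin{equation*}
\norm{z^{\beta}}_{\hat{\varrho},\sigma}
\leq \sup_{t>0}\varphi(t)^{q}\e^{-\sigma t}.
\end{equation*}

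Next I rewrite this supremum in terms of the function $y_{\sigma}$ from Lemma~\ref{lem:y-u-t} by choosing $u=q$: from the identity $q\,y_{\sigma}(q,t)=q\ln\varphi(t)-q\ln\varphi(q)-\sigma t$, we have
\begin{equation*}
\varphi(t)^{q}\e^{-\sigma t}=\varphi(q)^{q}\exp\bigl(q\,y_{\sigma}(q,t)\bigr).
\end{equation*}
For any fixed $\sigma'$ with $0<\sigma'<\sigma$, Lemma~\ref{lem:y-u-t} supplies $T_{1}>0$ such that whenever both $q\geq T_{1}$ and $t\geq T_{1}$,
\begin{equation*}
y_{\sigma}(q,t)\leq -\frac{1}{\rho}\ln(\e\rho\sigma'),
\end{equation*}
which after exponentiation yields precisely
\begin{equation*}
\varphi(t)^{q}\e^{-\sigma t}
\leq \varphi(q)^{q}(\e\rho)^{-q/\rho}(\sigma')^{-q/\rho}
= (\sigma')^{-q/\rho}\,G_{\hat{\varrho},q},
\end{equation*}
the desired bound (with $C=1$) on this range.

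It remains to deal with the two boundary regions, which is where the constant $C$ enters. For $0<t\leq T_{1}$ and arbitrary $q$, the monotonicity of $\varphi$ gives $\varphi(t)^{q}\e^{-\sigma t}\leq \varphi(T_{1})^{q}$, a purely exponential growth in $q$; since $G_{\hat{\varrho},q}(\sigma')^{-q/\rho}$ grows super-exponentially (recall $\varphi(q)\to\infty$), this contribution is dominated by $C(\sigma')^{-q/\rho}G_{\hat{\varrho},q}$ for large $q$ and absorbed into $C$ for small $q$. Finally, for the finitely many $q<T_{1}$ the norm $\norm{z^{\beta}}_{\hat{\varrho},\sigma}$ is a finite number and $(\sigma')^{-q/\rho}G_{\hat{\varrho},q}>0$, so enlarging $C$ covers these cases. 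I expect no serious obstacle: the one point requiring care is just the correct matching of constants in the application of Lemma~\ref{lem:y-u-t} (the factor $(\e\rho)^{-q/\rho}$ coming out of $G_{\hat{\varrho},q}$ must line up with the additive term $\tfrac{1}{\rho}\ln(\e\rho)$ in that lemma), but the identity above makes this bookkeeping automatic.
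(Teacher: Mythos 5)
Your proposal is correct and follows essentially the same route as the paper's proof: replace $\varrho$ by $\hat{\varrho}$, parametrize $|z|=\varphi(t)$, rewrite the supremum in terms of $y_{\sigma}(q,t)$, and apply Lemma~\ref{lem:y-u-t} with $u=q$. The only (cosmetic) difference is in the inner region $t<T_1$: the paper raises the threshold to a $T_2\geq T_1$ so that the exponent stays $\leq -\tfrac{q}{\rho}\ln\sigma'$ for $q\geq T_2$, whereas you compare $\varphi(T_1)^q$ directly with $(\sigma')^{-q/\rho}G_{\hat{\varrho},q}$ and invoke $\varphi(q)\to\infty$; these are equivalent bookkeeping.
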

\begin{proof}
In this proof, we use the abbreviation $G_q$ for $G_{\hat{\varrho},q}$.

Since the norms $\norm{\cdot}_{\varrho,\sigma}$ and
$\norm{\cdot}_{\hat{\varrho},\sigma}$ are equivalent,
we may assume from the beginning that
$\varrho(r)$ satisfies \eqref{eq:rhohat-2}
and $\hat{\varrho}(r)\equiv\varrho(r)$.

We will first prove
\begin{equation*}
|z^{\beta} \e^{- w_{\sigma} (z)}|
\leq {\sigma'}^{-\abs{\beta}/\rho} G_{|\beta|}
\end{equation*}
for sufficiently large $\abs{z}$ and $|\beta|$.
Recall that $r=\varphi(t)$ denotes the inverse function of $t=r^{\varrho(r)}$.
Setting $q := |\beta|$ and $r=\abs{z}=\varphi(t)$, we have
\begin{align*}
|z^{\beta} \e^{- w_{\sigma}(z)}| G_q^{-1}
&\leq
r^q \e^{- \sigma r^{\varrho(r)}} G_q^{-1} \\
&= \exp\bigl(q \ln \varphi(t) - \sigma t - \ln G_q\bigr) \\
&= \exp\Bigl[q \bigl\{ \ln \varphi(t) - \sigma \frac{t}{q}
- \ln \varphi(q) + \frac{1}{\rho} \ln (\e \rho) \bigr\}\Bigr]\\
&= \exp\Bigl[q \bigl\{ \ln \frac{\varphi(t)}{\varphi(q)} - \sigma \frac{t}{q}
 + \frac{1}{\rho} \ln (\e \rho) \bigr\}\Bigr].
\end{align*}
Applying Lemma~\ref{lem:y-u-t} to
\begin{equation*}
y_{\sigma}(u,t):=\ln\frac{\varphi(t)}{\varphi(u)}-\sigma\frac{t}{u}
\end{equation*}
and $\sigma'$ with $0<\sigma'<\sigma$,
we get $T_1>0$ such that
\begin{equation*}
y_{\sigma}(u,t)+\frac{1}{\rho}\ln(\e\rho)
\leq -\frac{1}{\rho}\ln\sigma',
\quad\text{for any $u,t\geq T_1$},
\end{equation*}
and therefore that
\begin{equation}
\label{eq:exterior}
|z^{\beta} \e^{- w_{\sigma}(z)}| G_q^{-1}
\leq
\exp\Bigl(-\frac{q}{\rho}\ln\sigma'\Bigr)
=\sigma'^{-q/\rho},
\quad
\text{for $|\beta|=q\geq T_1$, $\abs{z}\geq \varphi(T_1)$}.
\end{equation}

Next we consider the case $|z|\leq \varphi(T_1)$, $|\beta|=q \gg 1$.
Similarly, we have, for $|z|=r$,
\begin{align*}
|z^{\beta} \e^{- w_{\sigma}(z)}| G_q^{-1}
&\leq
r^q \e^{- \sigma r^{\varrho(r)}} G_q^{-1} \\
&= \exp\bigl(q \ln r - \sigma r^{\varrho(r)} - \ln G_q\bigr) \\
&= \exp\Bigl[q \bigl\{ \ln r - \sigma \frac{r^{\varrho(r)}}{q} 
- \ln \varphi(q) + \frac{1}{\rho} \ln (\e \rho) \bigr\}\Bigr]\\
&= \exp\Bigl[q \bigl\{ \ln \frac{\varphi(t)}{\varphi(q)}
 - \sigma \frac{t}{q} + \frac{1}{\rho} \ln (\e \rho) \bigr\}\Bigr]
\\
&\leq \exp\Bigl[
q \bigl\{ \ln \frac{\varphi(T_1)}{\varphi(q)}
+ \frac{1}{\rho}\ln (\e \rho) \bigr\}
\Bigr].
\end{align*}
Here at the last inequality,
we used $r=\varphi(t)\leq\varphi(T_1)$
and neglected the non-positive term $-\sigma t/q$.
Note that for any given $\sigma'>0$, we can take $T_2\geq T_1$
such that
\begin{equation*}
\ln \frac{\varphi(T_1)}{\varphi(T_2)}
+ \frac{1}{\rho}\ln (\e \rho)
\leq -\frac{1}{\rho}\ln\sigma',
\end{equation*}
and therefore that
\begin{equation}
\label{eq:interior}
|z^{\beta} \e^{- w_{\sigma}(z)}| G_q^{-1}
\leq\sigma'^{-q/\rho},
\quad
\text{for $|\beta|=q\geq T_2$, $\abs{z}\leq \varphi(T_1)$}.
\end{equation}

These two inequalities \eqref{eq:exterior} and \eqref{eq:interior} imply
\begin{equation*}
|z^{\beta} \e^{- w_{\sigma}(z)}| G_q^{-1}
\leq\sigma'^{-q/\rho},
\quad
\text{for $|\beta|=q\geq T_2$, $z\in\CC^n$},
\end{equation*}
or,
\begin{equation*}
\norm{z^{\beta}}_{\varrho,\sigma} 
\leq {\sigma'}^{-|\beta|/\rho} G_{|\beta|},
\quad
\text{for $|\beta|\geq T_2$}.
\end{equation*}
We know already that $ \norm{z^{\beta}}_{\varrho,\sigma}<+\infty$ for any $\beta$.
Therefore, under the choice of $C$ with
\begin{equation*}
C\geq
\max\Bigl\{1,
\max_{|\beta|<T_2}
\norm{z^{\beta}}_{\varrho,\sigma} G_{|\beta|}^{-1} {\sigma'}^{|\beta|/\rho}
\Bigr\},
\end{equation*}
we get the estimate \eqref{eq:norm-monomial}.
\end{proof}

We also prepare some estimates of differential operators.
\begin{lem}\label{lem:norm-differentiation}
There exists a constant $\kappa$ depending only on $\rho$ for which the
following statement holds:
For any $\sigma>0$, we can take $C(\sigma)$ such that
for any $f\in A_{\hat{\varrho},\sigma}$, and any $\alpha\in\NN^n$,
the inequality
\begin{equation*}
\frac{1}{\alpha!}\norm{\partial_z^{\alpha}f(z)}_{\hat{\varrho},\kappa\sigma}
\leq C(\sigma)\norm{f}_{\hat{\varrho},\sigma}
\frac{(2 \kappa n^{\rho/2}\sigma)^{q/\rho}}{G_{\hat{\varrho},q}}
\end{equation*}
holds.
Here we denote $q=\abs{\alpha}$.
\end{lem}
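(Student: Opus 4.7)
The approach is a Cauchy integral estimate on a polydisc of radius $R>0$, followed by an optimization in $R$. For $f\in A_{\hat{\varrho},\sigma}$ and $z\in\CC^n$, Cauchy's formula on the polytorus $\{\abs{w_j-z_j}=R\}_{j=1}^n$ gives $\abs{\partial_z^\alpha f(z)}/\alpha!\leq R^{-q}\sup_{\abs{w_j-z_j}=R}\abs{f(w)}$ with $q=\abs{\alpha}$. On this polytorus $\abs{w}\leq\abs{z}+R\sqrt n$. Combining this with the pointwise bound $\abs{f(w)}\leq\norm{f}_{\hat{\varrho},\sigma}\exp(\sigma\abs{w}^{\hat{\varrho}(\abs{w})})$ and the pseudo-subadditivity inequality of Lemma~\ref{lem:pseudo-subadditivity}, whose constant $\kappa$ depends only on $\rho$ (together with some $B$), I obtain, after multiplying by $\e^{-\kappa\sigma\abs{z}^{\hat{\varrho}(\abs{z})}}$ and taking the supremum in $z$,
\begin{equation*}
\frac{1}{\alpha!}\norm{\partial_z^\alpha f}_{\hat{\varrho},\kappa\sigma}
\leq\norm{f}_{\hat{\varrho},\sigma}\,\e^{\sigma B}\inf_{R>0}\frac{\exp\bigl(\kappa\sigma(R\sqrt n)^{\hat{\varrho}(R\sqrt n)}\bigr)}{R^q}.
\end{equation*}
This $\kappa$ is the one sought in the statement.

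To bound the infimum, substitute $s=R\sqrt n$ and $t=s^{\hat{\varrho}(s)}$, so $s=\varphi(t)$; the infimum becomes $n^{q/2}\inf_{t>0}\e^{\kappa\sigma t}/\varphi(t)^q$. A direct calculation using $G_{\hat{\varrho},q}=\varphi(q)^q/(\e\rho)^{q/\rho}$ and the function $y_{\sigma}(u,t)=\ln(\varphi(t)/\varphi(u))-\sigma t/u$ of Lemma~\ref{lem:y-u-t} gives the algebraic identity
\begin{equation*}
\frac{\e^{\kappa\sigma t}}{\varphi(t)^q}
=\frac{\exp(-q\,y_{\kappa\sigma}(q,t))}{G_{\hat{\varrho},q}\,(\e\rho)^{q/\rho}}.
\end{equation*}
The task therefore reduces to exhibiting a single $t$ at which $y_{\kappa\sigma}(q,t)$ admits a sufficiently strong \emph{lower} bound---a dual problem to Lemma~\ref{lem:y-u-t}, which produces upper bounds on $y$.

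Guided by the computation in the proof of Lemma~\ref{lem:y-u-t}, the function $y_{\kappa\sigma}(q,\cdot)$ is asymptotically maximized near $t=q/(\kappa\sigma\rho)$, with supremum close to $-(1/\rho)\ln(\e\kappa\sigma\rho)$. I would choose $t=q/(\kappa\sigma\rho)$ and apply Lemma~\ref{VitesseDeVarphi} with a small parameter $\delta>0$ to the integral representation of $\ln(\varphi(t)/\varphi(q))$, obtaining, for $q\geq q_0(\sigma)$ (a threshold that ensures both $q\ge T_\delta$ and $t\ge T_\delta$),
\begin{equation*}
y_{\kappa\sigma}(q,q/(\kappa\sigma\rho))
\geq-\frac{1}{\rho}\ln(\e\kappa\sigma\rho)-\delta\abs{\ln(\kappa\sigma\rho)}.
\end{equation*}
The main obstacle is matching constants: the target bound $(2\kappa n^{\rho/2}\sigma)^{q/\rho}/G_{\hat{\varrho},q}$ corresponds to requiring $y_{\kappa\sigma}(q,t)\geq-(1/\rho)\ln(2\kappa\sigma\e\rho)$, and the factor $2$ in the statement contributes exactly the $(1/\rho)\ln 2$ of slack needed to absorb the $\delta\abs{\ln(\kappa\sigma\rho)}$ error, provided $\delta$ is chosen sufficiently small as a function of $\sigma$ and $\rho$. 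This yields the claimed inequality for $q\geq q_0(\sigma)$; for the finitely many remaining degrees $q<q_0(\sigma)$, a Cauchy estimate with any fixed radius $R$ provides bounds $\norm{\partial_z^\alpha f}_{\hat{\varrho},\kappa\sigma}/\alpha!\leq C_{\alpha}\norm{f}_{\hat{\varrho},\sigma}$ which, since $G_{\hat{\varrho},q}^{-1}(2\kappa n^{\rho/2}\sigma)^{q/\rho}$ is bounded away from zero on the finite range $q<q_0(\sigma)$, are absorbed into $C(\sigma)$.
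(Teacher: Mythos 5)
Your proof is correct and follows essentially the same route as the paper's: Cauchy estimate on a polydisc of radius $R=s/\sqrt{n}$, the pseudo-subadditivity Lemma~\ref{lem:pseudo-subadditivity} to pull out $\exp(\kappa\sigma|z|^{\hat{\varrho}(|z|)})$, the change of variable $s=\varphi(t)$, and the specific choice $t=q/(\kappa\sigma\rho)$ in the resulting infimum. The only variation is that you re-derive the asymptotic $\varphi(q)/\varphi(q/(\kappa\sigma\rho))\to(\kappa\sigma\rho)^{1/\rho}$ from Lemma~\ref{VitesseDeVarphi} (phrasing it as a lower bound on $y_{\kappa\sigma}$), whereas the paper cites this limit directly from Levin \cite[p.42, (1.58)]{Lv}; the factor $2$ is used to absorb the error in both arguments.
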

(C.f. \cite[Lemma 2.1]{aiosu}).
\begin{proof}
First we apply Lemma~\ref{lem:pseudo-subadditivity} to $\hat{\varrho}(r)$,
and get constants $\kappa$ and $B$ such that
\begin{equation*}
(r+s)^{\hat{\varrho}(r+s)}
\leq \kappa(r^{\hat{\varrho}(r)}+s^{\hat{\varrho}(s)})+B,
\quad\text{for $r,s>0$}.
\end{equation*}
Note that $\kappa$ depends only on $\rho$ while $B$ depends on $\hat{\varrho}(r)$.
The Cauchy estimate gives us, for $\abs{z}\leq r$,
$\abs{\alpha}=q$,
\begin{align*}
\frac{\abs{\partial_z^{\alpha}f(z)}}{\alpha!}
&\leq\inf_{s>0}\frac{1}{(s/\sqrt{n})^{\abs{\alpha}}}
\max_{\abs{\zeta_i}=s/\sqrt{n}}\abs{f(z+\zeta)}
\\
&\leq
\norm{f}_{\hat{\varrho},\sigma}
\inf_{s>0}\frac{n^{q/2}}{s^{q}}\exp(\sigma(r+s)^{\hat{\varrho}(r+s)})
\\
&\leq
\norm{f}_{\hat{\varrho},\sigma}
\inf_{s>0}\frac{n^{q/2}}{s^{q}}
\exp\bigl(\kappa\sigma(r^{\hat{\varrho}(r)}+s^{\hat{\varrho}(s)})+B\sigma\bigr)
\\
&=
\e^{B\sigma}\norm{f}_{\hat{\varrho},\sigma}\exp(\kappa\sigma r^{\hat{\varrho}(r)})
n^{q/2}
\inf_{s>0}\frac{\exp(\kappa\sigma s^{\hat{\varrho}(s)})}{s^{q}}.
\end{align*}
The infimum can be estimated as
\begin{align*}
\inf_{s>0}\frac{\exp(\kappa\sigma s^{\hat{\varrho}(s)})}{s^{q}}
&\leq
\inf_{t>0}\frac{\exp(\kappa\sigma t)}{\varphi(t)^{q}}
\leq
\frac{\exp(\kappa\sigma t)}{\varphi(t)^{q}}\Bigr|_{t=q/(\kappa\sigma\rho)}
\\
&\leq
\frac{\e^{q/\rho}}{\varphi(q/(\kappa\sigma\rho))^q}.
\end{align*}
Therefore, we have
\begin{align*}
\frac{\abs{\partial_z^{\alpha}f(z)}}{\alpha!}
&\leq
\e^{B\sigma}\norm{f}_{\hat{\varrho},\sigma}\exp(\kappa\sigma r^{\hat{\varrho}(r)})
\frac{(n^{\rho/2}\e)^{q/\rho}}{\varphi(q)^q}\cdot
\Bigl(\frac{\varphi(q)}{\varphi(q/(\kappa\sigma\rho))}\Bigr)^q.
\end{align*}
It follows from
\begin{equation*}
\lim_{q\rightarrow\infty}\frac{\varphi(q)}{\varphi(q/(\kappa\sigma\rho))}
=(\kappa\sigma\rho)^{1/\rho}
\end{equation*}
(see \cite[p.42, (1.58)]{Lv}),
that there exists $C(\sigma)$ depending on $\sigma$
and $\hat{\varrho}$ such that
\begin{equation*}
\forall q,\quad
\e^{B\sigma}\Bigl(\frac{\varphi(q)}{\varphi(q/(\kappa\sigma\rho))}\Bigr)^q
\leq C(\sigma)(2 \kappa\sigma\rho)^{q/\rho}.
\end{equation*}
Therefore, we have
\begin{align*}
\frac{\abs{\partial_z^{\alpha}f(z)}}{\alpha!}
&\leq
C(\sigma)\norm{f}_{\hat{\varrho},\sigma}\exp(\kappa\sigma r^{\hat{\varrho}(r)})
\frac{(n^{\rho/2}\e)^{q/\rho}}{\varphi(q)^q}
(2 \kappa\sigma\rho)^{q/\rho}
\\
&\leq
C(\sigma)\norm{f}_{\hat{\varrho},\sigma}\exp(\kappa\sigma r^{\hat{\varrho}(r)})
\frac{(2 \kappa n^{\rho/2}\sigma)^{q/\rho}}{G_{\hat{\varrho},q}},
\end{align*}
which conclude the proof.
\end{proof}

By these preparations, we give
\begin{prop}\label{prop:taylor-exp-converge}
For an entire function $f(z)$ belonging to $A_{\varrho,\sigma+0}$,
its Taylor expansion $\sum_{\alpha\in\NN^n}f_{\alpha}z^{\alpha}$
converges to $f(z)$ in the space $A_{\varrho,\sqrt{n}^{\rho}\sigma+0}$.

In particular, the set of polynomials $\CC[z]$
is dense in $A_{\varrho,+0}$ and also dense in $A_{\varrho}$.
\end{prop}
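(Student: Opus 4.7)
The plan is to establish the convergence of the Taylor series in each Banach space $A_{\varrho,\sqrt{n}^{\rho}\sigma+\eps}$ for every $\eps>0$, which by definition gives convergence in the projective limit $A_{\varrho,\sqrt{n}^{\rho}\sigma+0}$. The two key inputs are Corollary~\ref{EstimationDeChaqueCoeff}, which controls the coefficients $f_{\alpha}$, and Lemma~\ref{normDUnMonome2}, which controls the norms of monomials; their product is designed to cancel the factor $G_{\hat{\varrho},|\alpha|}$ and leave a geometrically summable remainder.

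Concretely, fix $\eps>0$ and set $\tau=\sqrt{n}^{\rho}\sigma+\eps$. Choose $\eta>0$ small and $\sigma'$ with $\sqrt{n}^{\rho}\sigma+\eta<\sigma'<\tau$. By Corollary~\ref{EstimationDeChaqueCoeff} there exists $Q$ such that $\max_{|\alpha|=q}|f_{\alpha}|G_{\hat{\varrho},q}\leq(\sqrt{n}^{\rho}\sigma+\eta)^{q/\rho}$ for $q\geq Q$, and Lemma~\ref{normDUnMonome2} gives $\norm{z^{\alpha}}_{\varrho,\tau}\leq C\sigma'^{-q/\rho}G_{\hat{\varrho},q}$ for all $\alpha$. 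Combining these and summing over $\alpha$ with $|\alpha|=q$ (whose number is $O(q^{n-1})$) yields
\begin{equation*}
\sum_{|\alpha|=q}\norm{f_{\alpha}z^{\alpha}}_{\varrho,\tau}
\leq C'\,q^{n-1}\Bigl(\frac{\sqrt{n}^{\rho}\sigma+\eta}{\sigma'}\Bigr)^{q/\rho},
\end{equation*}
a geometrically decaying bound (thanks to our choice $\sqrt{n}^{\rho}\sigma+\eta<\sigma'$). Hence $\sum_{\alpha}f_{\alpha}z^{\alpha}$ converges absolutely in $A_{\varrho,\tau}$. Because the norm $\norm{\cdot}_{\varrho,\tau}$ dominates the sup-norm on compacts, the limit coincides with the $\calO(\CC^n)$-sum, i.e.\ with $f$.

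The density statements follow immediately. For $A_{\varrho,+0}$ apply the above with $\sigma=0$: the partial sums of the Taylor series, which are polynomials, converge to $f$ in $A_{\varrho,+0}$. For $A_{\varrho}$, any $f\in A_{\varrho}$ lies in some $A_{\varrho,\sigma}\subset A_{\varrho,\sigma+0}$, so its Taylor series converges in $A_{\varrho,\sqrt{n}^{\rho}\sigma+0}$, and the continuous inclusion $A_{\varrho,\sqrt{n}^{\rho}\sigma+\eps}\hookrightarrow A_{\varrho}$ transports this to convergence in $A_{\varrho}$.

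The main technical point is the bookkeeping of the three parameters $\eta,\sigma',\eps$ in the strict chain $\sqrt{n}^{\rho}\sigma+\eta<\sigma'<\sqrt{n}^{\rho}\sigma+\eps$; the growth loss by $\sqrt{n}^{\rho}$ is forced by Corollary~\ref{EstimationDeChaqueCoeff}, not by the summation argument itself. Once this bookkeeping is set, the proof is a routine combination of the earlier lemmas, with no further analytic obstruction.
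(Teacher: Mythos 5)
Your proposal is correct and follows essentially the same route as the paper: it combines Corollary~\ref{EstimationDeChaqueCoeff} (coefficient bound) with Lemma~\ref{normDUnMonome2} (monomial norm bound), cancels the $G_{\hat{\varrho},q}$ factors, sums geometrically with the polynomial multiplicity factor ${}_nH_q=O(q^{n-1})$, and then derives the two density statements exactly as the paper does (the $\sigma=0$ case for $A_{\varrho,+0}$ and the continuous inclusion for $A_{\varrho}$). The only differences are cosmetic: your intermediate parameters $\eta<\sigma'-\sqrt{n}^{\rho}\sigma<\eps$ play the role of the paper's $\eps/4<\eps/2<\eps$, and you make explicit (where the paper leaves implicit) that the limit in $A_{\varrho,\tau}$ agrees with $f$ because the norm dominates compact convergence.
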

\begin{proof}
For the former statement, it suffices to show that
$\sum_{\alpha\in\NN^n}\norm{f_{\alpha}z^{\alpha}}_{\varrho,\sqrt{n}^{\rho}(\sigma+\eps)}$
is finite for any $\eps>0$.

By Lemma~\ref{normDUnMonome2}, there exists a constant $C_0$ such that
\begin{equation*}
\norm{z^{\alpha}}_{\varrho,\sqrt{n}^{\rho}(\sigma+\eps)}
\leq C_0\bigl(\sqrt{n}^{\rho}(\sigma+\eps/2)\bigr)^{-\abs{\alpha}/\rho}
G_{\hat{\varrho},\abs{\alpha}},
\quad
\text{for any $\alpha\in\NN^n$}.
\end{equation*}
On the other hand, by Corollary~\ref{EstimationDeChaqueCoeff},
there is a constant $C_1$ such that
\begin{equation*}
\max_{\abs{\alpha}=q}\abs{f_{\alpha}}G_{\hat{\varrho},q}
\leq C_1\bigl(\sqrt{n}^{\rho}(\sigma+\eps/4)\bigr)^{q/\rho},
\quad
\text{for any $\alpha\in\NN^n$}.
\end{equation*}
Therefore, we have
\begin{align*}
\sum_{\alpha\in\NN^n}
\norm{f_{\alpha}z^{\alpha}}_{\varrho,\sqrt{n}^{\rho}(\sigma+\eps)}
&=\sum_{q\in\NN}\sum_{\abs{\alpha}=q}
\abs{f_{\alpha}}G_{\hat{\varrho},q}
\cdot\norm{z^{\alpha}}_{\varrho,\sqrt{n}^{\rho}(\sigma+\eps)}/G_{\hat{\varrho},q}
\\
&\leq
\sum_{q\in\NN}\sum_{\abs{\alpha}=q}
C_0\bigl(\sqrt{n}^{\rho}(\sigma+\eps/2)\bigr)^{-q/\rho}
C_1\bigl(\sqrt{n}^{\rho}(\sigma+\eps/4)\bigr)^{q/\rho}
\\
&\leq
C_0 C_1
\sum_{q\in\NN}
{}_{n}H_q
\Bigl(\frac{\sigma+\eps/4}{\sigma+\eps/2}\Bigr)^{q/\rho}
\\
&<+\infty.
\end{align*}
Here we used $0<\frac{\sigma+\eps/4}{\sigma+\eps/2}<1$
and
\begin{equation}
\label{eq:multi-choose}
{}_{n}H_q:=\sum_{\abs{\alpha}=q}1=\binom{n+q-1}{q}\leq(q+1)^{n-1}.
\end{equation}

For the latter statement in the case $f\in A_{\varrho,+0}$,
it follows from the former with $\sigma=0$ that
\begin{equation*}
\lim_{q\rightarrow\infty}\sum_{\abs{\alpha}\leq q}f_{\alpha}z^{\alpha}=f(z),
\end{equation*}
in the space $A_{\varrho,+0}$.

In the case $f\in A_{\varrho}$, there exists $\sigma$
such that $f\in A_{\varrho,\sigma+0}$.
Then the same convergence holds
in the space $f\in A_{\varrho,\sqrt{n}^{\rho}\sigma+0}$,
and therefore in the space $f\in A_{\varrho}$.
\end{proof}

\section{differential operator representations of continuous homomorphisms}
\label{sec:diff-oper-repr}

Before studying continuous homomorphisms between $A_{\varrho_i}$
($i=1,2$) and those between $A_{\varrho_i,+0}$ ($i=1,2$),
we prepare a differential operator representation
of homomorphisms from $\CC[z]$ to $\CC[[z]]$.
We define the space of formal differential operators of infinite order
with coefficients in $\CC[[z]]$ by
\begin{equation*}
\hat{D}:=\{P=\sum_{\alpha\in\NN^n}a_{\alpha}(z)\partial_z^{\alpha}\mid
a_{\alpha}(z)\in\CC[[z]]\}.
\end{equation*}
Note that $\hat{D}$ is linearly isomorphic to
$\prod_{\alpha\in\NN^n}\CC[[z]]$,
by the correspondence
\begin{equation*}
\sum_{\alpha\in\NN^n}a_{\alpha}(z)\partial_z^{\alpha}
\mapsto(a_{\alpha}(z))_{\alpha\in\NN^n}.
\end{equation*}

\begin{prop}\label{prop:linear-poly-fps}
There are two linear isomorphisms:
\begin{equation*}
\hat{D}
\xrightarrow{\sim}
\mathop{\mathrm{Hom}}\nolimits_{\CC}(\CC[z],\CC[[z]])
\xrightarrow{\sim}
\prod_{\beta\in\NN^n}\CC[[z]],
\end{equation*}
where the first and second mappings are given by
\begin{gather*}
\sum_{\alpha\in\NN^n}a_{\alpha}(z)\partial_z^{\alpha}\mapsto
\Bigl(\CC[z]\ni\sum_{\gamma}f_{\gamma}z^\gamma\mapsto
\sum_{\alpha\leq\gamma}a_{\alpha}(z)
\frac{f_{\gamma}\gamma!z^{\gamma-\alpha}}{(\gamma-\alpha)!}
\in\CC[[z]]\Bigr),
\\
F\mapsto(F z^{\beta}/\beta!)_{\beta\in\NN^n},
\end{gather*}
respectively.
\end{prop}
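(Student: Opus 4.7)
The plan is to settle the two arrows separately. The right-hand arrow is essentially a basis argument, and the left-hand one then reduces to inverting an upper-triangular system of linear equations indexed by multi-indices. A useful preliminary observation is that the composition of the two arrows admits an explicit, triangular formula, so once the right-hand arrow is known to be an isomorphism, the left-hand one follows by inverting that composition.

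First, for the right-hand arrow: the family $\{z^{\beta}/\beta!\}_{\beta\in\NN^n}$ is a $\CC$-basis of $\CC[z]$, so a linear map $F\in\mathop{\mathrm{Hom}}\nolimits_{\CC}(\CC[z],\CC[[z]])$ is uniquely determined by the sequence of values $(F(z^{\beta}/\beta!))_{\beta}$, and conversely any element of $\prod_{\beta}\CC[[z]]$ extends uniquely by linearity to such a homomorphism. For the left-hand arrow, the formula makes sense because on a polynomial $\sum_{\gamma}f_{\gamma}z^{\gamma}$ only finitely many $\gamma$ appear, and for each such $\gamma$ the sum over $\alpha$ is restricted to $\alpha\leq\gamma$, so only finitely many terms contribute; linearity in the coefficient sequence $(a_{\alpha})$ is then evident from the formula.

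To establish bijectivity of the left-hand arrow, I would compute the composition of the two arrows explicitly. Using $\partial_z^{\alpha}z^{\beta}=(\beta!/(\beta-\alpha)!)z^{\beta-\alpha}$ when $\alpha\leq\beta$ and $0$ otherwise, the composition sends $P=\sum_{\alpha}a_{\alpha}(z)\partial_z^{\alpha}$ to
$$
\Bigl(P(z^{\beta}/\beta!)\Bigr)_{\beta}
=\Bigl(a_{\beta}(z)+\sum_{\alpha<\beta}a_{\alpha}(z)\frac{z^{\beta-\alpha}}{(\beta-\alpha)!}\Bigr)_{\beta}\in\prod_{\beta\in\NN^n}\CC[[z]].
$$
This is an upper-triangular linear endomorphism of $\prod_{\beta}\CC[[z]]$ with identity on the diagonal. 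Hence induction on $|\beta|$ inverts it: for any prescribed target $(b_{\beta}(z))_{\beta}$, one sets $a_{0}(z):=b_{0}(z)$ and
$$
a_{\beta}(z):=b_{\beta}(z)-\sum_{\alpha<\beta}a_{\alpha}(z)\frac{z^{\beta-\alpha}}{(\beta-\alpha)!},\qquad|\beta|\geq 1,
$$
giving a unique $P\in\hat{D}$ whose image under the composition is $(b_{\beta})_{\beta}$. This proves that the composition is a linear bijection, and combined with the already established bijectivity of the right-hand arrow, it implies that the left-hand arrow is also a bijection.

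The whole argument is purely formal and combinatorial; the only point requiring care is to carry out the induction with respect to a total order refining the partial order $\leq$ on $\NN^n$ (for instance, by $|\beta|$) so that each $a_{\beta}(z)$ is expressed in terms of $a_{\alpha}(z)$ with $\alpha<\beta$ only. I do not anticipate any genuine obstacle.
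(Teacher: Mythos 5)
Your proposal is correct and follows the same overall structure as the paper's proof: the right-hand arrow is a bijection because $\{z^{\beta}/\beta!\}_{\beta}$ is a $\CC$-basis of $\CC[z]$, and the left-hand arrow is then handled by computing the composite map and showing it is bijective. The only place you diverge is in how you establish bijectivity of that composite $(a_{\alpha})_{\alpha}\mapsto\bigl(\sum_{\alpha\leq\beta}a_{\alpha}(z)\,z^{\beta-\alpha}/(\beta-\alpha)!\bigr)_{\beta}$: you observe it is triangular with identity diagonal and invert it by induction on $|\beta|$, whereas the paper writes down the closed-form inverse $a_{\alpha}(z)=\sum_{\beta\leq\alpha}b_{\beta}(z)\,(-z)^{\alpha-\beta}/(\alpha-\beta)!$ and verifies it directly via the binomial cancellation $\sum_{\alpha\le\beta\le\gamma}\frac{z^{\beta-\alpha}}{(\beta-\alpha)!}\frac{(-z)^{\gamma-\beta}}{(\gamma-\beta)!}=\frac{(z-z)^{\gamma-\alpha}}{(\gamma-\alpha)!}$. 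Both are correct and essentially equivalent; the paper's preference for the explicit formula is not incidental, though, since precisely that expression, in the guise of \eqref{a_alpha-def}, is reused in the proofs of Theorems~\ref{thm:main1}(ii) and~\ref{thm:main2}(ii) to construct the coefficients $a_{\alpha}$ of the symbol from a given continuous homomorphism $F$, so having it in closed form here saves work later.
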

\begin{proof}
We can easily see that both mappings are linear mappings between vector spaces
and that their composition is given by
\begin{equation*}
\sum_{\alpha}a_{\alpha}(z)\partial_z^{\alpha}
\mapsto
\Bigl(\sum_{\alpha\leq\beta}a_{\alpha}(z)
\frac{z^{\beta-\alpha}}{(\beta-\alpha)!}\Bigr)_{\beta\in\NN^n}.
\end{equation*}
Therefore, it suffices to show that the relation
\begin{equation}\label{eq:b-beta-by-a-alpha}
b_{\beta}(z)=\sum_{\alpha\leq\beta}a_{\alpha}(z)
\frac{z^{\beta-\alpha}}{(\beta-\alpha)!},
\quad
\beta\in\NN^n,
\end{equation}
induces a bijection
\begin{equation*}
\prod_{\alpha\in\NN^n}\CC[[z]]\ni
(a_{\alpha}(z))_{\alpha\in\NN^n}
\mapsto
(b_{\beta}(z))_{\beta\in\NN^n}
\in\prod_{\beta\in\NN^n}\CC[[z]].
\end{equation*}
This follows from the fact that the relation \eqref{eq:b-beta-by-a-alpha}
can be inverted as
\begin{equation}\label{eq:a-alpha-by-b-beta}
a_{\alpha}(z)=\sum_{\beta\leq\alpha}b_{\beta}(z)
\frac{(-z)^{\alpha-\beta}}{(\alpha-\beta)!},
\quad
\alpha\in\NN^n.
\end{equation}
(C.f. \cite[the proof of Theorem 3.3]{aiosu}).
In fact, we can calculate
the $\gamma$-element of
the image of $(a_{\alpha}(z))_{\alpha}$
by the composition of \eqref{eq:a-alpha-by-b-beta}
and \eqref{eq:b-beta-by-a-alpha} as
\begin{align*}
\sum_{\beta\leq\gamma}
\sum_{\alpha\leq\beta}a_{\alpha}(z)\frac{z^{\beta-\alpha}}{(\beta-\alpha)!}
\frac{(-z)^{\gamma-\beta}}{(\gamma-\beta)!}
=\sum_{\alpha\leq\gamma}a_{\alpha}(z)
\frac{(z-z)^{\gamma-\alpha}}{(\gamma-\alpha)!}
=a_{\gamma}(z),
\end{align*}
which implies the composition is the identity.
We can similarly show that the composition of \eqref{eq:b-beta-by-a-alpha}
and \eqref{eq:a-alpha-by-b-beta} is the identity.
\end{proof}

Now we study
continuous homomorphisms from $A_{\varrho_1}$ to $A_{\varrho_2}$
and those from $A_{\varrho_1,+0}$ to $A_{\varrho_2,+0}$,
where $\varrho_i$ ($i=1,2$) are two proximate orders for positive orders
$\rho_i=\lim_{r\rightarrow\infty}\varrho_i(r)>0$
satisfying
\begin{equation}
\label{eq:varrho1-is-O-varrho2}
r^{\varrho_1(r)}=O(r^{\varrho_2(r)}),
\quad\text{as $r\rightarrow\infty$}.
\end{equation}
For such $\varrho_i$ ($i=1,2$), we define two subspaces of $\hat{D}$.
\begin{defn}\label{def:D-rho1-rho2}
Let $\varrho_i$ ($i=1,2$) be two proximate orders for orders
$\rho_i>0$ satisfying \eqref{eq:varrho1-is-O-varrho2}.
We take a normalization $\hat{\varrho}_1$ of $\varrho_1$
as in Definition~\ref{def:normaliz-prox} and $G_{\hat{\varrho}_1,q}$
by \eqref{G-rho-q}.
We denote by $\boldsymbol{D}_{\varrho_1\rightarrow\varrho_2}$
and by $\boldsymbol{D}_{\varrho_1\rightarrow\varrho_2,0}$
the sets of all formal differential operator $P$ of the form
\begin{equation}\label{inf-ord}
P=\sum_{\alpha\in\NN^n}a_{\alpha}(z)\partial_z^{\alpha},
\end{equation}
where the multisequence
$(a_{\alpha}(z))_{\alpha\in\NN^n}\subset A_{\varrho_2}$
satisfies
\begin{equation}
\label{eq:est-D-rho1-rho2-normal}
\forall\lambda>0,\,
\exists\sigma>0,\,
\exists C>0,\,
\forall\alpha\in\NN^n,\,
\norm{a_{\alpha}}_{\varrho_2,\sigma}
\leq C\frac{G_{\hat{\varrho}_1,\abs{\alpha}}}{\alpha!}\lambda^{\abs{\alpha}},
\end{equation}
and
\begin{equation}
\label{eq:est-D-rho1-rho2-minimal}
\forall\sigma>0,\,
\exists \lambda>0,\,
\exists C>0,\,
\forall\alpha\in\NN^n,\,
\norm{a_{\alpha}}_{\varrho_2,\sigma}
\leq C\frac{G_{\hat{\varrho}_1,\abs{\alpha}}}{\alpha!}\lambda^{\abs{\alpha}},
\end{equation}
respectively.
Note that in the latter case,
each $a_{\alpha}$ belongs to $A_{\varrho_2,+0}$.
\end{defn}

For the case $\varrho_1=\varrho_2$, we denote
$\boldsymbol{D}_{\varrho\rightarrow\varrho}$ by
$\boldsymbol{D}_{\varrho}$,
and $\boldsymbol{D}_{\varrho\rightarrow\varrho,0}$ by
$\boldsymbol{D}_{\varrho,0}$.
That is,
\begin{defn}\label{def:D-rho}
Let $\varrho$ be a proximate order for an order $\rho>0$.
We take a normalization $\hat{\varrho}$ and a sequence $G_{\hat{\rho},q}$,
given by Definition~\ref{def:normaliz-prox} and \eqref{G-rho-q}.
We denote by $\boldsymbol{D}_{\varrho}$
and by $\boldsymbol{D}_{\varrho,0}$
the sets of all formal differential operator $P$ of the form
\begin{equation*}
P=\sum_{\alpha\in\NN^n}a_{\alpha}(z)\partial_z^{\alpha},
\end{equation*}
where the multisequence
$(a_{\alpha}(z))_{\alpha\in\NN^n}\subset A_{\varrho}$
satisfies
\begin{equation}
\label{eq:est-D-rho-normal}
\forall\lambda>0,\,
\exists\sigma>0,\,
\exists C>0,\,
\forall\alpha\in\NN^n,\,
\norm{a_{\alpha}}_{\varrho,\sigma}
\leq C\frac{G_{\hat{\varrho},\abs{\alpha}}}{\alpha!}\lambda^{\abs{\alpha}},
\end{equation}
and
\begin{equation}
\label{eq:est-D-rho-minimal}
\forall\sigma>0,\,
\exists \lambda>0,\,
\exists C>0,\,
\forall\alpha\in\NN^n,\,
\norm{a_{\alpha}}_{\varrho,\sigma}
\leq C\frac{G_{\hat{\varrho},\abs{\alpha}}}{\alpha!}\lambda^{\abs{\alpha}},
\end{equation}
respectively.
Note also that in the latter case,
each $a_{\alpha}$ belongs to $A_{\varrho,+0}$.
\end{defn}
Note that the definitions above are well-defined, that is,
they do not depend on the choice of normalizations of $\varrho_1$ and $\varrho$.

\begin{rem}\label{rem:replace-prox-orders}
By adding $\ln c/\ln r$ for a constant $c>0$
(with a suitable modification near $r=0$)
to a proximate order $\varrho_2(r)$ with order $\rho_2$,
we get a new proximate order $\tilde{\varrho}_2(r)$ for the same order $\rho_2$
satisfying $\tilde{\varrho}_2(r)=\varrho_2(r)+\ln c/\ln r$ for $r\gg 1$,
that is, $r^{\tilde{\varrho}_2(r)}=c r^{\varrho_2(r)}$ eventually.
Then $\norm{\cdot}_{\tilde{\varrho}_2,\sigma}$
and $\norm{\cdot}_{\varrho_2,c\sigma}$
become equivalent norms for $\sigma>0$,
and the spaces $A_{\tilde{\varrho}_2}$ and $A_{\tilde{\varrho}_2,+0}$
are homeomorphic to $A_{\varrho_2}$ and $A_{\varrho_2,+0}$
respectively.
By taking sufficiently large $c$, we can take $\tilde{\varrho}_2$ as
\begin{equation*}
\varrho_1(r)\leq \tilde{\varrho}_2(r),
\quad\text{for $r\geq r_0$}
\end{equation*}
for a suitable $r_0$,
and we can choose
normalizations $\hat{\varrho}_1$ of $\varrho_1$
and $\hat{\varrho}_2$ of $\tilde{\varrho}_2$ as
\begin{itemize}
\item[(i)]
$\hat{\varrho}_1(r)\leq\hat{\varrho}_2(r)$ for $r\geq 0$.
\end{itemize}
Since a proximate order and its normalization define equivalent norms,
we have
\begin{itemize}
\item[(ii)]
$\norm{\cdot}_{\hat{\varrho}_2,\sigma}$
and $\norm{\cdot}_{\varrho_2,c\sigma}$ are equivalent
for any $c>0$,
\item[(iii)]
$\boldsymbol{D}_{\varrho_1\rightarrow\varrho_2}
=\boldsymbol{D}_{\hat{\varrho}_1\rightarrow\hat{\varrho}_2}$,
$\boldsymbol{D}_{\varrho_1\rightarrow\varrho_2,0}
=\boldsymbol{D}_{\hat{\varrho}_1\rightarrow\hat{\varrho}_2,0}$.
\end{itemize}
Note further that
Theorems~\ref{thm:main1} and \ref{thm:main2} below are not affected
by the replacement of $\varrho_1$ and $\varrho_2$
by $\hat{\varrho}_1$ and $\hat{\varrho}_2$.
\end{rem}

Now we will prove:
\begin{thm}\label{thm:main1}
Let $\rho_i$ $(i=1,2)$ be two proximate orders for orders $\rho_i>0$
satisfying \eqref{eq:varrho1-is-O-varrho2}.

\noindent{\rm (i)}
Suppose that $P\in\boldsymbol{D}_{\varrho_1\rightarrow\varrho_2}$
has the form \eqref{inf-ord}.
For an entire function $f\in A_{\varrho_1}$,
\[
Pf:=\sum_\alpha a_\alpha(z)\partial_z^\alpha f
\]
converges and $Pf\in A_{\varrho_2}$.
Moreover, $f\mapsto Pf$ defines a continuous homomorphism
$P:A_{\varrho_1}\longrightarrow A_{\varrho_2}$.

\noindent{\rm (ii)}
Let $F:A_{\varrho_1}\longrightarrow A_{\varrho_2}$
be a continuous homomorphism.
Then there is a unique $P\in\boldsymbol{D}_{\varrho_1\rightarrow\varrho_2}$
such that $Ff=Pf$ holds for any $f\in A_{\varrho_1}$.
\end{thm}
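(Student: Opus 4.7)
The plan is to handle the two directions separately, in both cases invoking Remark~\ref{rem:replace-prox-orders} (which uses the hypothesis \eqref{eq:varrho1-is-O-varrho2}) to assume without loss of generality that $\hat{\varrho}_1\leq\hat{\varrho}_2$, so that the inclusion $A_{\hat{\varrho}_1,\sigma}\hookrightarrow A_{\hat{\varrho}_2,\sigma}$ is bounded for each $\sigma>0$, and to exploit the submultiplicativity $\norm{fg}_{\varrho_2,\sigma+\tau}\leq\norm{f}_{\varrho_2,\sigma}\norm{g}_{\varrho_2,\tau}$ of the type norms (immediate from \eqref{Arhosigma}).

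For (i), given $f\in A_{\varrho_1,\sigma_1}$, I would apply Lemma~\ref{lem:norm-differentiation} (with $\hat{\varrho}=\hat{\varrho}_1$) to bound
\[
\frac{1}{\alpha!}\norm{\partial_z^\alpha f}_{\hat{\varrho}_1,\kappa\sigma_1}
\leq C(\sigma_1)\norm{f}_{\hat{\varrho}_1,\sigma_1}\frac{(2\kappa n^{\rho_1/2}\sigma_1)^{\abs{\alpha}/\rho_1}}{G_{\hat{\varrho}_1,\abs{\alpha}}},
\]
and then pass to the $\hat{\varrho}_2$-norm at type $\kappa\sigma_1$ via the inclusion. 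Choosing $\lambda>0$ so small that $\lambda(2\kappa n^{\rho_1/2}\sigma_1)^{1/\rho_1}<1/2$, the coefficient estimate \eqref{eq:est-D-rho1-rho2-normal} furnishes $\sigma_a,C_a$ with $\norm{a_\alpha}_{\varrho_2,\sigma_a}\leq C_a G_{\hat{\varrho}_1,\abs{\alpha}}\lambda^{\abs{\alpha}}/\alpha!$. Submultiplicativity then bounds $\norm{a_\alpha\partial_z^\alpha f}_{\varrho_2,\sigma_a+\kappa\sigma_1}$ by $\tilde{C}(\sigma_1)\norm{f}_{\varrho_1,\sigma_1}\cdot 2^{-\abs{\alpha}}$, and summing using \eqref{eq:multi-choose} yields $\norm{Pf}_{\varrho_2,\sigma_a+\kappa\sigma_1}\leq C\norm{f}_{\varrho_1,\sigma_1}$. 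This simultaneously proves the convergence of $Pf$ in $A_{\varrho_2}$ and the continuity of $P$.

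For (ii), continuity of $F$ between DFS spaces yields: for every $\sigma_1>0$ there exist $\sigma_2,C>0$ with $\norm{Ff}_{\varrho_2,\sigma_2}\leq C\norm{f}_{\varrho_1,\sigma_1}$ for all $f\in A_{\varrho_1,\sigma_1}$. Setting $b_\beta:=F(z^\beta)/\beta!$ and bounding $\norm{z^\beta}_{\varrho_1,\sigma_1}$ by Lemma~\ref{normDUnMonome2}, I obtain, for any preassigned $\lambda>0$ (upon taking $\sigma_1$ large enough), a constant $C'$ with $\norm{b_\beta}_{\varrho_2,\sigma_2}\leq C'\lambda^{\abs{\beta}}G_{\hat{\varrho}_1,\abs{\beta}}/\beta!$. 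I then define $P=\sum_\alpha a_\alpha(z)\partial_z^\alpha$ via the inversion formula $a_\alpha=\sum_{\beta\leq\alpha}b_\beta\cdot(-z)^{\alpha-\beta}/(\alpha-\beta)!$ of Proposition~\ref{prop:linear-poly-fps}. A direct calculation shows $P(z^\gamma/\gamma!)=b_\gamma=F(z^\gamma/\gamma!)$, so $P$ and $F$ agree on $\CC[z]$; since $\CC[z]$ is dense in $A_{\varrho_1}$ by Proposition~\ref{prop:taylor-exp-converge} and both operators are continuous (with $P$ continuous by part (i) once the required coefficient bound is established), the identity $P=F$ extends to all of $A_{\varrho_1}$. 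Uniqueness follows from Proposition~\ref{prop:linear-poly-fps}.

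The main obstacle is closing the estimate \eqref{eq:est-D-rho1-rho2-normal} for the coefficients $a_\alpha$ themselves. A second application of Lemma~\ref{normDUnMonome2} to $\norm{z^{\alpha-\beta}}_{\varrho_2,\tau'}$ introduces a $G_{\hat{\varrho}_2,\abs{\alpha-\beta}}$ factor, which I would absorb into $G_{\hat{\varrho}_1,\abs{\alpha-\beta}}$ using the comparison $G_{\hat{\varrho}_2,q}\leq C_0^q G_{\hat{\varrho}_1,q}$ (valid under the WLOG assumption since then $\varphi_2\leq\varphi_1$). Lemma~\ref{lem:Gq} then collapses $G_{\hat{\varrho}_1,\abs{\beta}}G_{\hat{\varrho}_1,\abs{\alpha-\beta}}\leq G_{\hat{\varrho}_1,\abs{\alpha}}$, and the surviving sum evaluates via
\[
\sum_{\beta\leq\alpha}\binom{\alpha}{\beta}\mu_1^{\abs{\beta}}\mu_2^{\abs{\alpha-\beta}}=(\mu_1+\mu_2)^{\abs{\alpha}},
\]
where $\mu_1+\mu_2$ can be made arbitrarily small by taking $\sigma_1,\tau'$ large enough. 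This verifies \eqref{eq:est-D-rho1-rho2-normal} for $(a_\alpha)_\alpha$ and completes the argument.
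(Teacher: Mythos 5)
Your proposal is correct and follows essentially the same route as the paper's: the normalization reduction via Remark~\ref{rem:replace-prox-orders}, Lemma~\ref{lem:norm-differentiation} together with norm submultiplicativity for part (i), and the continuity factorization for (DFS)-maps together with Proposition~\ref{prop:linear-poly-fps}, Lemma~\ref{normDUnMonome2}, Lemma~\ref{lem:Gq}, and density of polynomials for part (ii). The only place you genuinely diverge is in closing the coefficient bound \eqref{eq:est-D-rho1-rho2-normal}: you apply Lemma~\ref{normDUnMonome2} to $\norm{z^{\alpha-\beta}}_{\varrho_2,\tau'}$ at the target scale, which produces a $G_{\hat{\varrho}_2,|\alpha-\beta|}$ that you reconcile with the required $G_{\hat{\varrho}_1}$ via the comparison $G_{\hat{\varrho}_2,q}\leq C_0^q G_{\hat{\varrho}_1,q}$ (sound, since $\varphi_2\leq\varphi_1$ under the WLOG normalization and the $(\e\rho_i)^{q/\rho_i}$ factors differ only by a $C^q$), so that the two pieces enter the binomial sum with different rates $\mu_1,\mu_2$ but still collapse to $(\mu_1+\mu_2)^{|\alpha|}$. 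The paper instead first uses $\norm{z^{\alpha-\beta}}_{\varrho_2,\tau_1}\leq\norm{z^{\alpha-\beta}}_{\varrho_1,\tau_1}$ (its inequality \eqref{eq:norm-varrho-2-leq-1}) and only then invokes Lemma~\ref{normDUnMonome2}, so that everything lives from the start in the $G_{\hat{\varrho}_1}$ scale with the same rate $\tau_0^{-1/\rho_1}$ on both factors, and the binomial sum is simply $2^{|\alpha|}$; this avoids the auxiliary $G$-comparison. Both routes close; the paper's is marginally leaner, yours is a valid alternative that trades the norm comparison for a $G$-comparison.
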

\begin{proof}
We can replace $\varrho_i$ by $\hat{\varrho}_i$ ($i=1,2$)
as in Remark~\ref{rem:replace-prox-orders},
and we may assume from the beginning
that $\varrho_i$ ($i=1,2$) are normalized proximate orders satisfying
\begin{equation*}
\varrho_1(r)\leq\varrho_2(r),
\quad\text{for $r\geq 0$},
\end{equation*}
which implies
\begin{equation}
\label{eq:norm-varrho-2-leq-1}
\norm{f}_{\varrho_2,\tau}\leq\norm{f}_{\varrho_1,\tau}
\end{equation}
for any $f$ and $\tau>0$.

(i)
Using Lemma~\ref{lem:norm-differentiation}
and the estimate~\eqref{eq:est-D-rho1-rho2-normal} for $(a_\alpha(z))_{\alpha\in\NN^n}$
in Definition~\ref{def:D-rho1-rho2},
we have a constant $\kappa=\kappa(\rho_1)$ depending only on $\rho_1$
such that
for any $\varepsilon>0$, $\tau>0$
there exists positive constants
$\sigma(\varepsilon)$, $C(\varepsilon)$ and $C'(\tau)$
with the estimate
\begin{align}
\label{Pfnorm}
&\sum_{\alpha\in\NN^n}
\norm{a_{\alpha}\partial_z^{\alpha}f}_{\varrho_2,\sigma(\eps)+\kappa\tau}
\\
\nonumber
&\leq
\sum_{\alpha\in\NN^n}\norm{a_{\alpha}}_{\varrho_2,\sigma(\eps)}
\norm{\partial_z^\alpha f}_{\varrho_2,\kappa\tau}
\\
\nonumber
&
\leq
\sum_{\alpha\in\NN^n}\norm{a_{\alpha}}_{\varrho_2,\sigma(\eps)}
\norm{\partial_z^\alpha f}_{\varrho_1,\kappa\tau}
\\
\nonumber
&\leq
\sum_{\alpha\in\NN^n}
C(\eps)\frac{G_{\varrho_1,\abs{\alpha}}}{\alpha!}\varepsilon^{\abs{\alpha}}
\cdot
C'(\tau)\norm{f}_{\varrho_1,\tau}
\frac{\alpha!}{G_{\varrho_1,\abs{\alpha}}}
\bigl(2\kappa n^{\rho_1/2}\tau\bigr)^{\abs{\alpha}/\rho_1}
\\
\nonumber
&=
C(\eps)C'(\tau)
\norm{f}_{\varrho_1,\tau}
\sum_{q\in\NN}
\sum_{\abs{\alpha}=q}
\eps^q
\bigl(2\kappa n^{\rho_1/2}\tau\bigr)^{q/\rho_1}
\\
\nonumber
&=
C(\eps)C'(\tau)
\norm{f}_{\varrho_1,\tau}
\sum_{q\in\NN}
{}_{n}H_q
\bigl(\sqrt{n}(2\kappa\tau)^{1/\rho_1}\eps\bigr)^q
\end{align}
for any $f\in A_{\varrho_1,\tau}$.
Here we have used
\begin{equation}
\label{eq:norm-submultiplicative}
\norm{g_1 g_2}_{\varrho,\tau_1+\tau_2}
\leq
\norm{g_1}_{\varrho,\tau_1}
\norm{g_2}_{\varrho,\tau_2},
\quad\text{for $g_i\in A_{\varrho,\tau_i}$, $(i=1,2)$},
\end{equation}
at the first inequality
and \eqref{eq:norm-varrho-2-leq-1} at the second.
Note that $\sigma(\eps)$, $C(\eps)$ and $C'(\tau)$ are independent of $f$.
It follows from \eqref{eq:multi-choose}
that the last sum in \eqref{Pfnorm} converges
if $\eps<(\sqrt{n}(2\kappa\tau)^{1/\rho_1})^{-1}$.
For such a choice of $\varepsilon>0$ depending on $\rho_1$ and $\tau$,
we set $\tau'(\tau)=\sigma(\eps)+\kappa\tau$ and
\begin{equation*}
C''=C(\eps)C'(\tau)
\sum_{q\in\NN}
{}_{n}H_q
\bigl(\sqrt{n}(2\kappa\tau)^{1/\rho_1}\eps\bigr)^q.
\end{equation*}
Then, $\sum_{\alpha}a_{\alpha}\partial_z^{\alpha}f$ converges in
$A_{\varrho,\tau'(\tau)}$
and defines an element $Pf\in A_{\varrho,\tau'(\tau)}$ satisfying
\begin{equation*}
\norm{Pf}_{\varrho_2,\tau'(\tau)}\leq C''\norm{f}_{\varrho,\tau}.
\end{equation*}
Since $f\in A_{\varrho,\tau}$ was chosen arbitrarily,
this implies the well-definedness and the continuity of
$P:A_{\varrho_1,\tau}\longrightarrow A_{\varrho_2,\tau'(\tau)}$.
Also since $\tau>0$ was chosen arbitrarily,
we get the well-definedness and the continuity
of $P:A_{\varrho_1}\longrightarrow A_{\varrho_2}$
by the definition of the inductive limit of locally convex spaces.

(ii) Let $F:A_{\varrho_1}\longrightarrow A_{\varrho_2}$ be
a continuous homomorphism.
Then, thanks to the theory of locally convex spaces,
we can conclude, using Lemma~\ref{compacticite},
that for any $\tau>0$, there exist $\tau'=\tau'(\tau)>0$
such that $F(A_{\varrho_1,\tau})\subset A_{\varrho_2,\tau'(\tau)}$
and that
\begin{equation*}
F:A_{\varrho_1,\tau}\rightarrow A_{\varrho_2,\tau'(\tau)}
\end{equation*}
is continuous.
(Refer to \cite[Chap. 3, \S 1, Proposition 7]{B-EVT},
or \cite[Chap 4, Part 1, 5, Corollary 1]{G-TVS}).
Therefore, we can in particular take $C(\tau)$ depending on $\tau>0$
for which
\begin{equation}\label{conti-est1}
\norm{F f}_{\varrho_2,\tau'(\tau)}\leq C(\tau)\norm{f}_{\varrho_1,\tau},
\quad\text{for any $f\in A_{\varrho_1,\tau}$}.
\end{equation}

Let us define a multisequence of entire functions $(a_{\alpha}(z))_{\alpha\in\NN^n}$ by
\begin{equation}\label{a_alpha-def}
a_\alpha(z)=\sum_{\beta\le\alpha}\frac{(-z)^{\alpha-\beta}}{(\alpha-\beta)!}
\frac{F z^\beta}{\beta!},
\end{equation}
whose convergence in $A_{\varrho_2}$ will be proved
together with their estimates.
We define a formal differential operator $P\in\hat{D}$ of infinite order by
\begin{equation}\label{defP}
P=\sum_\alpha a_\alpha(z)\partial_z^\alpha.
\end{equation}
First we show that $P\in\boldsymbol{D}_{\varrho_1\rightarrow\varrho_2}$.
For any fixed $\tau_0$ and $\tau_1$ with $0<\tau_0<\tau_1$,
we have
\begin{align}
\label{eq:est-a-alpha}
&\norm{a_\alpha}_{\varrho_2,\tau_1+\tau'(\tau_1)}
\\
\nonumber
&\leq
\sum_{\beta\leq\alpha}
\frac{\norm{z^{\alpha-\beta}}_{\varrho_2,\tau_1}\norm{Fz^\beta}_{\varrho_2,\tau'(\tau_1)}}%
{(\alpha-\beta)!\beta!}
\\
\nonumber
&\leq
\sum_{\beta\leq\alpha}
\frac{\norm{z^{\alpha-\beta}}_{\varrho_1,\tau_1}C(\tau_1)\norm{z^\beta}_{\rho_1,\tau_1}}%
{(\alpha-\beta)!\beta!}
\\
\nonumber
&\leq
\sum_{\beta\le\alpha}
\frac{C(\tau_0,\tau_1)\tau_0^{-\abs{\alpha-\beta}/\rho_1}G_{\varrho_1,\abs{\alpha-\beta}}
C(\tau_1)C(\tau_0,\tau_1)\tau_0^{-\abs{\beta}/\rho_1}G_{\varrho_1,\abs{\beta}}}%
{(\alpha-\beta)!\beta!}
\\
\nonumber
&\leq
C(\tau_1)C(\tau_0,\tau_1)^2
\sum_{\beta\le\alpha}\binom{\alpha}{\beta}
\frac{G_{\varrho_1,\abs{\alpha}}}{\alpha!}
\tau_0^{-\abs{\alpha}/\rho_1}
\\
\nonumber
&=
C(\tau_1)C(\tau_0,\tau_1)^2
\cdot\frac{G_{\varrho_1,\abs{\alpha}}}{\alpha!}
2^{\abs{\alpha}}\tau_0^{-\abs{\alpha}/\rho_1}.
\end{align}
Here we used
\eqref{eq:norm-submultiplicative} at the first inequality,
\eqref{eq:norm-varrho-2-leq-1} and \eqref{conti-est1} at the second,
Lemma~\ref{normDUnMonome2} with $0<\tau_0<\tau_1$ at the third,
and Lemma~\ref{lem:Gq} at the fourth.

For a given $\eps>0$, we can take $\tau_0>0$ large enough
such that
\begin{equation*}
2\tau_0^{-1/\rho_1}<\eps.
\end{equation*}
Then, by choosing $\tau_1$ as $\tau_1>\tau_0$
and by putting $\sigma:=\tau+\tau'(\tau_1)$, we have
\begin{equation*}
\norm{a_\alpha}_{\varrho_2,\sigma}
\leq
C'
\cdot\frac{G_{\varrho_1,\abs{\alpha}}}{\alpha!}
\eps^{\abs{\alpha}}
\end{equation*}
for any $\alpha$,
which implies $P\in\boldsymbol{D}_{\varrho_1\rightarrow\varrho_2}$.

Now we show that $Pf=Ff$ for any $f\in A_{\varrho_1}$.
Since \eqref{a_alpha-def} corresponds to \eqref{eq:a-alpha-by-b-beta},
it follows from Proposition~\ref{prop:linear-poly-fps} that
this equality holds for any polynomial $f$.
It extends to $A_{\varrho_1}$ by the continuity
since polynomials form a dense subset of $A_{\varrho_1}$,
which was shown in Proposition~\ref{prop:taylor-exp-converge}.
\end{proof}

As a corollary, we give
\begin{cor}[{\cite[Corollary 6.5]{J}}]\label{cor:main1}
Let $\varrho$ be a proximate order for an order $\rho>0$.

\noindent{\rm (i)}
Suppose that $P\in\boldsymbol{D}_{\varrho}$ has the form \eqref{inf-ord}.
For an entire function $f\in A_{\varrho}$,
\[
Pf:=\sum_\alpha a_\alpha(z)\partial_z^\alpha f
\]
converges and $Pf\in A_{\varrho}$.
Moreover, $f\mapsto Pf$ defines a continuous endomorphism of $A_{\varrho}$.

\noindent{\rm (ii)}
Let $F$ be a continuous endomorphism of $A_{\varrho}$.
Then there is a unique $P\in\boldsymbol{D}_{\varrho}$
such that $Ff=Pf$ holds for any $f\in A_{\varrho}$.
\end{cor}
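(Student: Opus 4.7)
The plan is to derive this result as a direct specialization of Theorem~\ref{thm:main1} to the diagonal case $\varrho_1=\varrho_2=\varrho$. The growth hypothesis \eqref{eq:varrho1-is-O-varrho2} becomes the trivial equality $r^{\varrho(r)}=r^{\varrho(r)}$, so it is automatically satisfied. Moreover, by Definition~\ref{def:D-rho}, we have $\boldsymbol{D}_{\varrho\to\varrho}=\boldsymbol{D}_{\varrho}$, since the conditions \eqref{eq:est-D-rho1-rho2-normal} and \eqref{eq:est-D-rho-normal} coincide once $\varrho_1=\varrho_2=\varrho$. Finally, a continuous endomorphism of $A_{\varrho}$ is, by definition, nothing but a continuous homomorphism $A_{\varrho}\to A_{\varrho}$.

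With these identifications in place, the two parts of the corollary translate directly. For (i), given $P\in\boldsymbol{D}_{\varrho}=\boldsymbol{D}_{\varrho\to\varrho}$ of the form \eqref{inf-ord}, Theorem~\ref{thm:main1}(i) guarantees convergence of $Pf=\sum_\alpha a_\alpha(z)\partial_z^\alpha f$ for every $f\in A_{\varrho}$, membership $Pf\in A_{\varrho}$, and continuity of the resulting map $A_{\varrho}\to A_{\varrho}$, which is then a continuous endomorphism. For (ii), any continuous endomorphism $F$ of $A_{\varrho}$ is in particular a continuous homomorphism $A_{\varrho}\to A_{\varrho}$, so Theorem~\ref{thm:main1}(ii) produces a unique $P\in\boldsymbol{D}_{\varrho\to\varrho}=\boldsymbol{D}_{\varrho}$ with $Ff=Pf$ for all $f\in A_{\varrho}$. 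Since there is no actual obstacle beyond unwinding definitions, the proof is essentially one line invoking Theorem~\ref{thm:main1}; the only thing worth remarking is the coincidence $\boldsymbol{D}_{\varrho\to\varrho}=\boldsymbol{D}_{\varrho}$, which is immediate from the definitions.
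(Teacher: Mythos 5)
Your proof is correct and matches the paper's intent exactly: the corollary is presented as a direct specialization of Theorem~\ref{thm:main1} to $\varrho_1=\varrho_2=\varrho$, and the identification $\boldsymbol{D}_{\varrho\to\varrho}=\boldsymbol{D}_{\varrho}$ is simply the notational convention introduced just before Definition~\ref{def:D-rho}.
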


For the case of minimal type, we will prove:
\begin{thm}\label{thm:main2}
Let $\rho_i$ $(i=1,2)$ be two proximate orders for orders $\rho_i>0$
satisfying \eqref{eq:varrho1-is-O-varrho2}.

\noindent{\rm (i)}
Let $P\in\boldsymbol{D}_{\varrho_1\rightarrow\varrho_2,0}$
be of the form \eqref{inf-ord}.
For an entire function $f\in A_{\varrho_1,+0}$,
\[
Pf:=\sum_\alpha a_\alpha(z)\partial_z^\alpha f
\]
converges and $Pf\in A_{\varrho_2,+0}$.
Moreover, $f\mapsto Pf$ defines a continuous homomorphism
$P:A_{\varrho_1,+0}\longrightarrow A_{\varrho_2,+0}$.

\noindent{\rm (ii)}
Let $F:A_{\varrho_1,+0}\longrightarrow A_{\varrho_2,+0}$
be a continuous homomorphism.
Then there is a unique
$P\in\boldsymbol{D}_{\varrho_1\rightarrow\varrho_2,0}$
such that $Ff=Pf$ holds for any $f\in A_{\varrho_1,+0}$.
\end{thm}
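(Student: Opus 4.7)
The plan is to parallel the proof of Theorem~\ref{thm:main1} almost line by line, with the only substantive changes forced by (a) the swap of quantifiers in \eqref{eq:est-D-rho1-rho2-minimal}, where $\lambda$ now depends on $\sigma$, and (b) the fact that $A_{\varrho_i,+0}$ are (FS)-spaces, so continuity of a linear map means: for every target seminorm indexed by $\sigma>0$ there exist a source seminorm indexed by $\tau>0$ and a constant $C>0$ with $\norm{Ff}_{\varrho_2,\sigma}\leq C\norm{f}_{\varrho_1,\tau}$. As in Remark~\ref{rem:replace-prox-orders}, I first normalize $\varrho_1,\varrho_2$ so that $\varrho_1(r)\leq\varrho_2(r)$ for $r\geq 0$, giving the pointwise comparison $\norm{g}_{\varrho_2,\tau}\leq\norm{g}_{\varrho_1,\tau}$ that both halves will exploit.

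For part (i), fix $\sigma>0$ and set $\sigma_1:=\sigma/2$. By \eqref{eq:est-D-rho1-rho2-minimal} there are $\lambda_0=\lambda_0(\sigma_1)$ and $C_0$ controlling $\norm{a_\alpha}_{\varrho_2,\sigma_1}$. Replaying the telescoped estimate \eqref{Pfnorm}, using submultiplicativity \eqref{eq:norm-submultiplicative}, the comparison $\varrho_1\leq\varrho_2$, and Lemma~\ref{lem:norm-differentiation} to control $\norm{\partial_z^\alpha f}_{\varrho_1,\kappa\tau}$, one obtains
\begin{equation*}
\norm{Pf}_{\varrho_2,\sigma_1+\kappa\tau}
\leq
C_0 C'(\tau)\norm{f}_{\varrho_1,\tau}
\sum_{q\in\NN}{}_{n}H_q
\bigl(\lambda_0(2\kappa n^{\rho_1/2}\tau)^{1/\rho_1}\bigr)^{q}.
\end{equation*}
Choosing $\tau>0$ small enough that both $\kappa\tau\leq\sigma/2$ and $\lambda_0(2\kappa n^{\rho_1/2}\tau)^{1/\rho_1}<1$ hold renders this sum finite and yields $\norm{Pf}_{\varrho_2,\sigma}\leq C\norm{f}_{\varrho_1,\tau}$, which is exactly the continuity criterion for the projective-limit topology.

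For part (ii), I define $a_\alpha(z)$ by \eqref{a_alpha-def}, as in the proof of Theorem~\ref{thm:main1}(ii). To verify $P\in\boldsymbol{D}_{\varrho_1\rightarrow\varrho_2,0}$, fix $\sigma>0$, split $\sigma=\sigma_1+\sigma_2$, and use continuity of $F$ at level $\sigma_2$ to obtain $\tau_*>0$ and $C_*>0$ with $\norm{Fg}_{\varrho_2,\sigma_2}\leq C_*\norm{g}_{\varrho_1,\tau_*}$ for all $g\in A_{\varrho_1,+0}$. Following the derivation of \eqref{eq:est-a-alpha}, I bound
\begin{equation*}
\norm{a_\alpha}_{\varrho_2,\sigma}
\leq \sum_{\beta\leq\alpha}
\frac{\norm{z^{\alpha-\beta}}_{\varrho_2,\sigma_1}\cdot\norm{Fz^\beta}_{\varrho_2,\sigma_2}}{(\alpha-\beta)!\,\beta!}
\end{equation*}
via \eqref{eq:norm-submultiplicative}, replace the first factor by $\norm{z^{\alpha-\beta}}_{\varrho_1,\sigma_1}$ (using $\varrho_1\leq\varrho_2$), apply Lemma~\ref{normDUnMonome2} to the two monomial norms with respect to $\varrho_1$ (with auxiliary $\tau_0<\min(\sigma_1,\tau_*)$), invoke supermultiplicativity from Lemma~\ref{lem:Gq}, and sum $\sum_{\beta\leq\alpha}\binom{\alpha}{\beta}=2^{\abs{\alpha}}$ to conclude
\begin{equation*}
\norm{a_\alpha}_{\varrho_2,\sigma}
\leq C\,\frac{G_{\hat{\varrho}_1,\abs{\alpha}}}{\alpha!}
(2\tau_0^{-1/\rho_1})^{\abs{\alpha}},
\end{equation*}
which is exactly \eqref{eq:est-D-rho1-rho2-minimal} with $\lambda:=2\tau_0^{-1/\rho_1}$ depending on $\sigma$. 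The identity $Pf=Ff$ then holds on $\CC[z]$ by Proposition~\ref{prop:linear-poly-fps} and extends to all of $A_{\varrho_1,+0}$ by the continuity of both $P$ (from part (i)) and $F$, together with the density of $\CC[z]$ in $A_{\varrho_1,+0}$ established in Proposition~\ref{prop:taylor-exp-converge}; uniqueness of $P$ follows from the injectivity in Proposition~\ref{prop:linear-poly-fps}.

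The main obstacle, as expected, is purely the bookkeeping of quantifier orderings in the minimal-type case, since $\lambda$ may only be produced after $\sigma$ is fixed: the dependencies must be threaded as $\sigma\to\sigma_1\to\lambda_0\to$ small $\tau$ in part (i), and as $\sigma\to(\sigma_1,\sigma_2)\to\tau_*\to\tau_0\to\lambda$ in part (ii). Beyond this reshuffling, no analytic input beyond what already appears in the proof of Theorem~\ref{thm:main1} should be needed.
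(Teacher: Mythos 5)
Your proposal is correct and follows essentially the same route as the paper's own proof: in part~(i) the paper also fixes a small $\sigma$ (there called $\sigma$ rather than $\sigma_1$), invokes \eqref{eq:est-D-rho1-rho2-minimal} to get $\lambda_\sigma$, replays \eqref{Pfnorm}, and then shrinks $\tau$ so that both $\sigma+\kappa\tau$ is small and the geometric sum converges; in part~(ii) the paper likewise defines $a_\alpha$ by \eqref{a_alpha-def}, splits the target seminorm as $\sigma/2+\sigma/2$ (your $\sigma_1+\sigma_2$), applies continuity, Lemma~\ref{normDUnMonome2} with an auxiliary $\tau_0<\min\{\tau_1,\sigma/2\}$, and Lemma~\ref{lem:Gq}, arriving at the same $\lambda=2\tau_0^{-1/\rho_1}$ and concluding by density from Proposition~\ref{prop:taylor-exp-converge}. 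The only differences are notational (your $\sigma_1,\sigma_2,\tau_*$ versus the paper's $\sigma/2,\tau_1$), and your quantifier-tracking paragraph at the end accurately describes the single genuine adjustment relative to Theorem~\ref{thm:main1}.
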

\begin{proof}
Again we can make a replacement of proximate orders
as in Remark~\ref{rem:replace-prox-orders},
and we may assume from the beginning
that $\varrho_i$ ($i=1,2$) are normalized proximate orders
satisfying \eqref{eq:norm-varrho-2-leq-1}.

(i)
We assume condition \eqref{eq:est-D-rho1-rho2-minimal}
for $(a_{\alpha}(z))_{\alpha\in\NN^n}$, and denote by $\lambda_{\sigma}$
the constant $\lambda$ given there according to $\sigma$.
Similar computations as in \eqref{Pfnorm} yield
\begin{align}
\label{Pfnorm0}
&\sum_{\alpha}
\norm{a_{\alpha}\partial_z^{\alpha}f}_{\varrho_2,\sigma+\kappa\tau}
\\
\nonumber
&\leq
\sum_{\alpha}\norm{a_\alpha}_{\varrho_2,\sigma}
\norm{\partial_z^\alpha f}_{\varrho_2,\kappa\tau}
\\
\nonumber
&\leq
\sum_{\alpha}\norm{a_\alpha}_{\varrho_2,\sigma}
\norm{\partial_z^\alpha f}_{\varrho_1,\kappa\tau}
\\
\nonumber
&\leq
\sum_{\alpha}
C(\sigma)\frac{G_{\varrho_1,\abs{\alpha}}}{\alpha!}
\lambda_{\sigma}^{\abs{\alpha}}
\cdot C'(\tau)\norm{f}_{\varrho_1,\tau}
\frac{\alpha!}{G_{\varrho_1,\abs{\alpha}}}
\bigl(2\kappa n^{\rho_1/2}\tau\bigr)^{\abs{\alpha}/\rho_1}
\\
\nonumber
&=
C(\sigma)C'(\tau)\norm{f}_{\varrho_1,\tau}
\sum_{q\in\NN}\sum_{\abs{\alpha}=q}
\lambda_{\sigma}^q
\bigl(2\kappa n^{\rho_1/2}\tau\bigr)^{q/\rho_1}
\\
\nonumber
&=
C(\sigma)C'(\tau)
\norm{f}_{\varrho_1,\tau}
\sum_{q\in\NN}
{}_{n}H_q
\bigl(\sqrt{n}(2\kappa\tau)^{1/\rho_1}\lambda_{\sigma}\bigr)^q
\end{align}
for any $\sigma$, $\tau>0$, $f\in A_{\varrho_1,+0}$.
Here the constant $\lambda(\sigma)$ is given by \eqref{eq:est-D-rho1-rho2-minimal},
$\kappa=\kappa(\rho_1)$ depends only on $\rho_1$,
and $C(\sigma)$, $C'(\tau)$ are independent of $f$.
Note that in view of \eqref{eq:multi-choose},
the last sum is finite
if $\sqrt{n}(2\kappa\tau)^{1/\rho_1}\lambda_{\sigma}<1$.

For any $\eps>0$, we can choose $\sigma=\sigma(\eps)>0$
and $\tau=\tau(\eps)>0$ so that $\sigma+\kappa\tau\leq\eps$
and $\sqrt{n}(2\kappa\tau)^{1/\rho_1}\lambda_{\sigma}<1$.
In fact, we may first choose $\sigma$ as $0<\sigma<\eps/2$, which
determines $\lambda_{\sigma}$,
and then secondly choose $\tau>0$ as
\begin{equation*}
\kappa\tau<\eps/2,
\quad
\sqrt{n}(2\kappa\tau)^{1/\rho_1}\lambda_{\sigma}<1.
\end{equation*}
Therefore, for any $\varepsilon>0$, there exist
$C''(\eps)>0$ and $\tau(\eps)>0$ such that
\begin{equation*}
\norm{Pf}_{\varrho_2,\varepsilon}
\leq C''(\eps)\norm{f}_{\varrho_1,\tau(\eps)},
\quad\text{for $f\in A_{\varrho,+0}$},
\end{equation*}
which implies
that $P:A_{\varrho_1,+0}\rightarrow A_{\varrho_2,+0}$ is continuous.

(ii)
For a given continuous homomorphism
$F:A_{\varrho_1,+0}\rightarrow A_{\varrho_2,+0}$,
we construct $P=\sum_\alpha a_\alpha(z)\partial_z^\alpha$
via \eqref{a_alpha-def}
in the same way as in the proof of Theorem~\ref{thm:main1}(ii).

Let us show the convergence of \eqref{a_alpha-def} in $A_{\varrho_2,+0}$
together with the estimates.
For any $\sigma>0$, there exists $\tau_1>0$ and $C$
by continuity
such that
\begin{equation}\label{eq:conti-est-minimal}
\norm{F f}_{\varrho_2,\sigma/2}\leq C\norm{f}_{\varrho_1,\tau_1},
\quad\text{for $f\in A_{\varrho_1,+0}$}.
\end{equation}
Take $\tau_0$ with $0<\tau_0<\min\{\tau_1,\sigma/2\}$.
Similarly to \eqref{eq:est-a-alpha}, we have
\begin{align}
\label{eq:est0-a-alpha}
\norm{a_\alpha}_{\varrho_2,\sigma}
&\leq
\sum_{\beta\leq\alpha}
\frac{\norm{z^{\alpha-\beta}}_{\varrho_2,\sigma/2}\norm{Fz^\beta}_{\varrho_2,\sigma/2}}%
{(\alpha-\beta)!\beta!}
\\
\nonumber
&\leq
\sum_{\beta\leq\alpha}
\frac{\norm{z^{\alpha-\beta}}_{\varrho_1,\sigma/2}
C\norm{z^\beta}_{\varrho_1,\tau_1}}%
{(\alpha-\beta)!\beta!}
\\
\nonumber
&\leq
C
\sum_{\beta\le\alpha}
\frac{C(\tau_0,\sigma/2)\tau_0^{-\abs{\alpha-\beta}/\rho_1}G_{\varrho_1,\abs{\alpha-\beta}}
C(\tau_0,\tau_1)\tau_0^{-\abs{\beta}/\rho_1}G_{\varrho_1,\abs{\beta}}}%
{(\alpha-\beta)!\beta!}
\\
\nonumber
&\leq
CC(\tau_0,\sigma/2)C(\tau_0,\tau_1)
\sum_{\beta\le\alpha}\binom{\alpha}{\beta}
\frac{G_{\varrho_1,\abs{\alpha}}}{\alpha!}
\tau_0^{-\abs{\alpha}/\rho_1}
\\
\nonumber
&=
C C(\tau_0,\sigma/2)C(\tau_0,\tau_1)
\cdot\frac{G_{\varrho_1,\abs{\alpha}}}{\alpha!}
2^{\abs{\alpha}}\tau_0^{-\abs{\alpha}/\rho_1}.
\end{align}
Here we used
\eqref{eq:norm-submultiplicative} at the first inequality,
\eqref{eq:norm-varrho-2-leq-1} and \eqref{eq:conti-est-minimal} at the second,
Lemma~\ref{normDUnMonome2} twice with $0<\tau_0<\sigma/2$, $0<\tau_0<\tau_1$ at the third,
and Lemma~\ref{lem:Gq} at the fourth.

Therefore, by defining
\begin{equation*}
\lambda:=2\tau_0^{-1/\rho_1},
\end{equation*}
we have
\begin{equation*}
\norm{a_\alpha}_{\varrho,\sigma}
\leq
C'
\cdot\frac{G_{\varrho_1,\abs{\alpha}}}{\alpha!}
\lambda^{\abs{\alpha}}
\end{equation*}
for any $\alpha$.
Since $\sigma>0$ can be chosen arbitrarily,
this implies $P\in\boldsymbol{D}_{\varrho_1\rightarrow\varrho_2,0}$.

The remaining thing is to show $Ff=Pf$ for $f\in A_{\varrho_1,+0}$.
This can be done completely in the same way as in the case of normal type,
that is, the equality for polynomial $f$ due to Proposition~\ref{prop:linear-poly-fps}
can be extended to $A_{\varrho_1,+0}$ by continuity,
since polynomials form a dense subset of $A_{\varrho_1,+0}$,
for which we again refer to Proposition~\ref{prop:taylor-exp-converge}.
\end{proof}

Again, as a corollary, we give
\begin{cor}[{\cite[Corollary 6.6]{J}}]\label{cor:main2}
Let $\varrho$ be a proximate order for an order $\rho>0$.

\noindent{\rm (i)}
Let $P\in\boldsymbol{D}_{\varrho,0}$ be of the form \eqref{inf-ord}.
For an entire function $f\in A_{\varrho,+0}$, 
\[
Pf:=\sum_\alpha a_\alpha(z)\partial_z^\alpha f
\]
converges and $Pf\in A_{\varrho,+0}$.
Moreover, $f\mapsto Pf$ defines a continuous endomorphism
of $A_{\varrho,+0}$.

\noindent{\rm (ii)}
Let $F$ be a continuous endomorphism of $A_{\varrho,+0}$.
Then there is a unique 
$P\in\boldsymbol{D}_{\varrho,0}$ such that $Ff=Pf$ holds for any $f\in A_{\varrho,+0}$. 
\end{cor}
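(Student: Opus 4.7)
The plan is to derive Corollary~\ref{cor:main2} as a direct specialization of Theorem~\ref{thm:main2} to the case $\varrho_1=\varrho_2=\varrho$. First I would observe that when both proximate orders coincide, condition \eqref{eq:varrho1-is-O-varrho2}, namely $r^{\varrho_1(r)}=O(r^{\varrho_2(r)})$, is trivially satisfied (it holds with equality). Thus the hypotheses of Theorem~\ref{thm:main2} are met for the pair $(\varrho,\varrho)$.

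Next I would invoke the identification $\boldsymbol{D}_{\varrho\rightarrow\varrho,0}=\boldsymbol{D}_{\varrho,0}$ that is built into Definition~\ref{def:D-rho}: the estimate \eqref{eq:est-D-rho1-rho2-minimal} becomes \eqref{eq:est-D-rho-minimal} verbatim once $\varrho_2=\varrho$ and $\hat{\varrho}_1=\hat{\varrho}$, so the two classes of formal differential operators coincide as subsets of $\hat D$. With this identification in hand, part (i) of Theorem~\ref{thm:main2} applied to $P\in\boldsymbol{D}_{\varrho,0}$ gives that $Pf$ converges for every $f\in A_{\varrho,+0}$ and defines a continuous map $A_{\varrho,+0}\to A_{\varrho,+0}$, i.e. a continuous endomorphism. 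Similarly, part (ii) applied to any continuous endomorphism $F$ of $A_{\varrho,+0}$ yields a unique $P\in\boldsymbol{D}_{\varrho,0}$ representing $F$.

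There is essentially no obstacle; the only thing to verify carefully is the matching of notation between Definitions~\ref{def:D-rho1-rho2} and \ref{def:D-rho} so that the specialization is genuinely tautological. Since the estimates, the normalization $\hat{\varrho}$, and the sequence $G_{\hat{\varrho},q}$ are defined in exactly the same way in both definitions when $\varrho_1=\varrho_2=\varrho$, no additional argument is needed.

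In summary, the proof proposal is just: \emph{Apply Theorem~\ref{thm:main2} with $\varrho_1=\varrho_2=\varrho$, noting that \eqref{eq:varrho1-is-O-varrho2} holds trivially and that $\boldsymbol{D}_{\varrho\rightarrow\varrho,0}=\boldsymbol{D}_{\varrho,0}$ by definition.} All the substantial work (convergence of $Pf$, continuity estimates via Lemmas~\ref{lem:norm-differentiation} and \ref{normDUnMonome2}, construction of $P$ from $F$ through formula \eqref{a_alpha-def}, and the density argument using Proposition~\ref{prop:taylor-exp-converge}) has already been carried out in the proof of Theorem~\ref{thm:main2}.
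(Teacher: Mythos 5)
Your proposal is correct and matches the paper's intent exactly: the paper presents Corollary~\ref{cor:main2} as an immediate specialization of Theorem~\ref{thm:main2} (with $\varrho_1=\varrho_2=\varrho$, so that \eqref{eq:varrho1-is-O-varrho2} is trivial and $\boldsymbol{D}_{\varrho\rightarrow\varrho,0}=\boldsymbol{D}_{\varrho,0}$ by Definition~\ref{def:D-rho}), giving no separate argument.
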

\begin{ex}[{\cite[Example 5.3]{aiosu}}]
We consider the following operator
\begin{equation*}
P=\exp\Bigl(-i\Bigl(t z+\frac{t^3}{6}\Bigr)\Bigr)
\exp\Bigl(\frac{i t}{2}\frac{\partial^2}{\partial z^2}
+\frac{t^2}{2}\frac{\partial}{\partial z}\Bigr)
\end{equation*}
in one variable $z\in\CC$ with parameter $t\in\CC$,
which is expected to be a solution operator
of the initial value problem for an unknown $\psi(t,z)$:
\begin{equation*}
\left\{
\begin{array}{l}
\displaystyle i\partial_t\psi
=\Bigl(-\frac{1}{2}\frac{\partial^2}{\partial z^2}+z\Bigr)\psi, \\
\psi(0,z)=\phi(z).
\end{array}
\right.
\end{equation*}
That is, $\psi$ is given by $\psi(t,z)=P\phi(z)$.
In \cite[Example 5.3]{aiosu}, we claimed that for each fixed $t$,
$P$ belongs to $\boldsymbol{D}_{p}$ in one variable $z$
if $1\leq p<2$.

For a (constant) order $p$, we can easily see
that $P$ for fixed $t$ does not belong to $\boldsymbol{D}_{p}$ in case $p\geq 2$,
and also that it belongs to $\boldsymbol{D}_{p,0}$ if and only if $1<p\leq 2$.
For example,
in order to prove $P\not\in\boldsymbol{D}_{p}$ for $p\geq 2$,
it suffices to show that for some $\eps>0$, there is no $C$ such that
\begin{equation*}
\frac{1}{j!}\leq C\frac{(2j)^{2j/p}}{(2j)!}\eps^{2j},
\quad\text{for any $j$}.
\end{equation*}
In particular, $P$ belongs to $\boldsymbol{D}_{2,0}$ but not to $\boldsymbol{D}_{2}$.

Now we consider a proximate order $\varrho(r)$
for the order $\rho=\lim_{r\rightarrow+\infty}\varrho(r)=2$.
If moreover $\varrho(r)$ satisfies
\begin{equation}\label{eq:rho-smaller-than-2}
\lim_{r\rightarrow+\infty}r^{\varrho(r)}/r^2=0,
\end{equation}
or, equivalently,
\begin{equation}\label{eq:t-over-phi-t-square}
\lim_{t\rightarrow+\infty}t/\varphi(t)^2=0,
\end{equation}
then, we see below that $P$ for fixed $t$ belongs
to $\boldsymbol{D}_{\varrho}$.

For the proof, it suffices to show that
\begin{equation*}
\exp\Bigl(\frac{i t}{2}\frac{\partial^2}{\partial z^2}\Bigr)
=\sum_{j\in\NN}\frac{1}{j!}
\Bigl(\frac{i t}{2}\Bigr)^j\Bigl(\frac{\partial}{\partial z}\Bigr)^{2j}
\in\boldsymbol{D}_{\varrho},
\end{equation*}
or, equivalently,
\begin{equation*}
\sup_j
\frac{\abs{t}^j}{2^j}\cdot\frac{1}{j!}
\cdot\frac{(2j)!}{G_{\hat{\varrho},2j}}\cdot\frac{1}{\eps^{2j}}
<+\infty,
\end{equation*}
for any $\eps>0$.
We have that
\begin{equation*}
\frac{\abs{t}^j}{2^j j!}\cdot\frac{(2j)!}{G_{\hat{\varrho},2j}\cdot\eps^{2j}}
=\frac{\abs{t}^j}{(2j)!!}\cdot
\frac{(2j)!(2\e)^{2j/\rho}}{\varphi(2j)^{2j}\eps^{2j}}
=
\Bigl(\frac{2j}{\varphi(2j)^2}
\cdot\frac{2\e\abs{t}}{\eps^2}\Bigr)^j
\cdot\frac{(2j-1)!!}{(2j)^j}
\end{equation*}
and that this sequence in $j$ converges to $0$ as $j\rightarrow\infty$.
Here we used \eqref{eq:t-over-phi-t-square} and $(2j-1)!!\leq(2j)^j$.
Therefore, it is in particular bounded.

Note also that there are many proximate orders for the order $\rho=2$
satisfying \eqref{eq:rho-smaller-than-2}.
Consider, for example, proximate orders $\varrho(r)$
given by $\varrho(r)=2+k\frac{\ln\ln r}{\ln r}$
or by $\varrho(r)=2+k\frac{\ln\ln\ln r}{\ln r}$ for large $r$
with a negative constant $k$.
\end{ex}


\end{document}